\documentclass[12pt]{amsart}

\usepackage{enumerate}

\usepackage[active]{srcltx}
\usepackage{nicefrac}

\setlength{\textwidth}{165mm}
\setlength{\textheight}{215mm}
\setlength{\parindent}{8mm}
\setlength{\oddsidemargin}{0pt}
\setlength{\evensidemargin}{0pt}
\setlength{\topmargin}{0pt}

\usepackage{amsthm,amssymb,setspace}
\usepackage[pagebackref,colorlinks,linkcolor=red,citecolor=blue,urlcolor=blue,hypertexnames=true]{hyperref}
\usepackage{amsrefs}
\usepackage[matrix, arrow]{xy}

\renewcommand{\deg}{{\rm deg}}

\newcommand{\A}{\mathcal A}

\newcommand{\id}{{\rm id}}
\newcommand{\N}{\mathbb N}
\newcommand{\R}{\mathbb R}

\newcommand{\C}{\mathbb C}

\newcommand{\si}{\sigma}

\theoremstyle{plain}
\newtheorem*{theorem*}{Theorem}
\newtheorem{theorem}{Theorem}[section]
\newtheorem{corollary}[theorem]{Corollary}
\newtheorem{lemma}[theorem]{Lemma}
\newtheorem{proposition}[theorem]{Proposition}
\newtheorem{conjecture}{Conjecture}

\theoremstyle{definition}
\newtheorem*{definition*}{Definition}

\theoremstyle{remark}

\newtheorem{remark}[theorem]{Remark}


\begin{document}

\onehalfspace

\title{Hyperbolic Polynomials and Generalized Clifford Algebras}

\author{Tim Netzer}
\address{Tim Netzer, Universit\"at Leipzig, Germany}
\email{netzer@math.uni-leipzig.de}

\author{Andreas Thom}
\address{Andreas Thom, Universit\"at Leipzig, Germany}
\email{thom@math.uni-leipzig.de}

\begin{abstract} We consider the problem of realizing hyperbolicity cones as spectrahedra, i.e. as linear slices of cones of positive semidefinite matrices.  The generalized Lax conjecture states that this is always possible. We use generalized Clifford algebras for a new approach to the problem. Our main result is that if $-1$ is not a sum of hermitian squares in the Clifford algebra of a hyperbolic polynomial, then  its hyperbolicity cone is spectrahedral. Our result also has computational applications, since this sufficient condition can be checked with a single semidefinite program. \end{abstract}

\maketitle

\tableofcontents

\section{Introduction} A homogeneous polynomial $h\in\R[x]=\R[x_1,\ldots,x_n]$ is called {\it hyperbolic in direction $e\in\R^n$}, if $h(e)\neq 0$ and the univariate polynomials $$h_{a,e}(t):=h(a-t\cdot e)$$ have only real roots, for all $a\in\R^n$. The {\it hyperbolicity cone of $h$ in direction $e$} is $$\Lambda_{e}(h)=\{a\in\R^n\mid h_{a,e}(t)\ \mbox{Êhas only nonnegative roots} \}.$$ It can be shown (see \cite{gar} and also \cite{ren}) that $\Lambda_e(h)$ is a convex cone with  interior $$\mathring{\Lambda}_e(h)=\{a\in\R^n\mid h_{a,e}(t)\ \mbox{Êhas only positive roots}\},$$ which also coincides with the connected component of $\{a\in\R^n\mid h(a)\neq 0\}$ containing $e.$ Furthermore, $h$ is hyperbolic in direction $e'$ for all $e'\in\mathring{\Lambda}_e(h)$, and the hyperbolicity cones coincide: $$\Lambda_{e'}(h)=\Lambda_e(h)\quad\mbox{Êand }\quad\mathring\Lambda_{e'}(h)=\mathring\Lambda_e(h).$$ 
A polynomial, hyperbolic in direction $e$, is said to have a {\it definite determinantal representation} if there are hermitian matrices $M_1,\ldots,M_n\in{\rm Her}_d(\C)$ with $$h=\det\left(x_1M_1+\cdots +x_nM_n\right)$$ and $e\bullet M:=e_1M_1+\cdots +e_nM_n$ strictly definite. Note that by homogeneity, the size $d$ of the matrices $M_i$ coincides with the degree of $h$.
Further note that from such a representation the hyperbolicity of $h$ is obvious. Indeed if we assume $e\bullet M=I$ (which we can if we assume $h(e)=1$ and after conjugation with a regular matrix) then $h_{a,e}(t)$ is the characteristic polynomial of the hermitian matrix $a\bullet M,$ and has thus only real roots. Also note that $$\Lambda_e(h)=\{ a\in\R^n\mid a\bullet  M\succeq 0\}\quad \mbox{and}\quad  \mathring{\Lambda}_e(h)=\{a\in\R^n\mid a\bullet  M\succ 0\}$$ in this case. Thus the hyperbolicity  cone is {\it spectrahedral}, i.e. defined by a linear matrix inequality, i.e. a linear section of the cone of positive semidefinite matrices.

The interest in hyperbolic polynomials arose in  the area of partial differential equations (see for example \cite{gar,lax}). In recent time, it has attracted attention in optimization, especially semidefinite optimization (see for example \cite{gue,wo,ren,hevi,past}), and also combinatorics (see for example \cite{cosw,br1}). In particular, the question whether a hyperbolic polynomial has a definite determinantal representation, or more generally whether the hyperbolicity cone is spectrahedral, has recently been discussed extensively. We state some of the known results. 

In the case $n=3,$ every hyperbolic polynomial admits a definite determinantal representation, even with real symmetric matrices. This is the main result from \cite{hevi}, which solves a conjecture of Peter Lax, going back to the 50's (\cite{lax}, see also \cite{lepara}). The same result cannot be true in higher dimensions, as is easily seen by a count of parameters. So several possible generalizations to higher dimensions have been discussed. One guess was that always some power $h^r$ of a hyperbolic polynomial admits a definite determinantal representation. This is true for quadratic polynomials \cite{neth}, but turns out to be false in general \cite{br1}. The following version, which has now become known as the {\it Generalized Lax Conjecture}, is still open: \begin{conjecture}[Generalized Lax Conjecture] \label{genlax}Every hyperbolicity cone is spectrahedral, i.e. of the form $$\Lambda_e(h)=\{a\in\R^n\mid a\bullet M\succeq 0\}$$ for some hermitian matrices $M_1,\ldots,M_n$ (i.e. a linear section of the cone of positive semidefinite matrices). Alternatively, for every hyperbolic polynomial $h$ there is another hyperbolic polynomial $f$ such that $\Lambda_e(fh)=\Lambda_e(h)$ and $fh$ admits a definite determinantal representation. \end{conjecture}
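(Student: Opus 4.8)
Since the Generalized Lax Conjecture is a long-standing open problem, I do not claim a complete argument; I will sketch instead the route --- through generalized Clifford algebras --- to the verifiable \emph{sufficient} condition announced in the abstract, and indicate where the full conjecture escapes this method. Fix $h\in\R[x]$ homogeneous of degree $d$, hyperbolic in direction $e$, normalized so that $h(e)=1$. Associate to $h$ the unital $*$-algebra $\A_h$: the quotient of the free real algebra $\R\langle u_1,\dots,u_n\rangle$, with each $u_i$ declared self-adjoint, by the relations that make $\bigl(\sum_i x_i u_i\bigr)^d = h(x)\cdot 1$ a polynomial identity in $x_1,\dots,x_n$. Each generator then satisfies $u_i^d\in\R\cdot 1$, so in any $*$-representation $\pi$ of $\A_h$ the operators $\pi(u_i)$ are bounded self-adjoint, and the relations of $\A_h$ pass to $\pi$.

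The key is a dictionary between the hyperbolicity cone and positivity in $\A_h$. Put $\omega=e\bullet u$ and $\omega_a=a\bullet u$; substituting $x=a+se$ into the defining identity gives $(\omega_a+s\,\omega)^d = h(a+se)\cdot 1$ for all $s$. Hence if $\pi$ is a \emph{finite-dimensional} $*$-representation with $\pi(\omega)\succ 0$, then $\pi(\omega)$ is a positive self-adjoint $d$-th root of $1$, so $\pi(\omega)=I$, and $(\pi(\omega_a)+sI)^d = h(a+se)\,I$. Since $\pi(\omega_a)$ is Hermitian, $\pi(\omega_a)+sI$ is invertible for every $s\ge 0$ exactly when $\pi(\omega_a)\succ 0$, whereas it is invertible exactly when $h(a+se)\neq 0$, which holds for all $s\ge 0$ precisely when $a\in\mathring{\Lambda}_e(h)$, by hyperbolicity. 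With $M_i=\pi(u_i)$ this gives $\mathring{\Lambda}_e(h)=\{a\in\R^n\mid a\bullet M\succ 0\}$, and therefore $\Lambda_e(h)=\{a\in\R^n\mid a\bullet M\succeq 0\}$ with $e\bullet M\succ 0$ upon taking closures --- a spectrahedral representation (indeed a definite determinantal one when $\pi$ has dimension $d$). So the whole problem reduces to producing \emph{one} finite-dimensional $*$-representation of $\A_h$ in which $\omega$ acts positively.

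This is where the hypothesis enters, and where the main obstacle lies. If $-1$ is not a sum of Hermitian squares in $\A_h$, the cone of Hermitian squares is proper, and a Hahn--Banach separation --- legitimate because $u_i^d\in\R\cdot 1$ keeps the generators bounded, so that $1$ is an order unit --- produces a state $\phi$ on $\A_h$ that is nonnegative on Hermitian squares and has $\phi(\omega)>0$. Its GNS construction is a $*$-representation on a Hilbert space, realizing $\Lambda_e(h)$ by a (possibly infinite-dimensional) operator version of the dictionary above; one must then replace it by a \emph{finite-dimensional} representation --- decomposing into irreducibles through the structure of the universal enveloping $C^*$-algebra of $\A_h$ --- and, crucially, retain a block on which $\pi(\omega)$ is still positive definite. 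The hard part is exactly this: \emph{a priori}, $\phi$ and hence its GNS representation could be supported where $\pi(\omega)$ has a nonpositive eigenvalue, and one must show that the mere absence of the square $-1$ forces a finite-dimensional $*$-representation carrying a subspace, large enough to reconstruct $\Lambda_e(h)$, on which $\omega$ acts by $+1$ rather than $-1$. Controlling finite-dimensionality and this positivity together is the crux, and it is precisely here that the criterion ``$-1$ is not a sum of Hermitian squares'' --- testable by a single semidefinite program on a degree truncation of $\A_h$ --- does its work. Beyond it, the Generalized Lax Conjecture itself stays out of reach because the defining identity of $\A_h$ is a strong constraint on $h$: the criterion can only ever settle a restricted family of hyperbolic polynomials, and reaching all of them would need a genuinely different algebra, or a hyperbolic multiplier $f$ with $\Lambda_e(fh)=\Lambda_e(h)$ as in the alternative formulation above.
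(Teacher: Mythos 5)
The statement you were asked about is a \emph{conjecture}; the paper does not prove it and neither do you, so there is no proof to verify. Your proposal is honest about this and correctly identifies the paper's strategy in broad outline (a generalized Clifford algebra, the sufficient condition that $-1$ is not a sum of hermitian squares, GNS). But two points in your sketch diverge from what the paper actually does, and the second is a genuine gap in the route you describe.

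First, your algebra is not the paper's algebra. You impose the identity $(x\bullet u)^d=h(x)\cdot 1$, which is the classical generalized Clifford algebra of a form; the paper's $\A_e(h)$ is instead cut out by the Cayley--Hamilton relations $h_{a,e}(a\bullet z)=0$ for all $a\in\R^n$ together with $1-e\bullet z$. These are much weaker relations (they only force $a\bullet\sigma$ to satisfy its ``characteristic polynomial,'' not to have $d$-th power a scalar), and the extra generator $1-e\bullet z$ makes $e\bullet\sigma=1$ identically, so the positivity of $\pi(\omega)$ that you worry about at length is built in from the start.

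Second, and more importantly, you identify the crux as passing from the GNS representation to a \emph{finite-dimensional} $*$-representation ``by decomposing into irreducibles.'' That step cannot work in general: a finite-dimensional $*$-representation of $\A_e(h)$ is equivalent to a definite determinantal representation of some power $h^r$ (the paper's Theorem \ref{power}), and Br\"and\'en's examples show such powers need not exist even when the hyperbolicity cone is spectrahedral. The paper's actual device is a \emph{compression}: from the possibly infinite-dimensional GNS space $H$ one takes the finite-dimensional subspace $H'$ spanned by $p(T)v$ with $\deg p\le d-1$, sets $M_i=\mathrm{pr}\circ T_i|_{H'}$, and observes that $h_{a,e}(a\bullet M)v=0$ still holds. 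This forces $\det(x\bullet M)=fh$ by irreducibility and the real Nullstellensatz, and the sums-of-squares statement $a\bullet\sigma\in\Sigma^2\A_e(h)$ for $a\in\mathring\Lambda_e(h)$ shows the compression stays positive semidefinite there, so $\Lambda_e(fh)=\Lambda_e(h)$. In other words, the extra factor $f$ in the ``alternative formulation'' of the conjecture is exactly what the compression produces; it is not an obstacle to be circumvented but the mechanism by which the method sidesteps the nonexistence of finite-dimensional representations.
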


Since $f$ can in general not be chosen to be some power of $h$, several other options have been considered. In fact, one could take $f$ to be some power of $e\bullet x=e_1x_1+\cdots +e_nx_n.$ At least if the intersection of $\Lambda_e(h)$ with the affine plane $\{e\bullet x=1\}$ is compact, this would not change the hyperbolicity cone. However, for almost no hyperbolic polynomial,  such a multiple will admit a definite determinantal representation. This is a straightforward translation of a result in  \cite{neth}. 

If $h$ is hyperbolic in direction $e$, then so is its directional derivative $\partial_e(h)$, and the hyperbolicity cone of $\partial_e(h)$ contains $\Lambda_e(h).$ This follows immediately from Rolle's theorem. So one could multiply $h$ with some of its (maybe higher) directional derivatives to obtain a definite determinantal representation, without changing the hyperbolicity cone. It was shown in \cite{br2} that this works for the elementary symmetric polynomials (which are hyperbolic in direction $e=(1,\ldots,1)$). In particular, their cones a spectrahedral, and thus all higher directional derivative cones of polyhedral cones are spectrahedral as well.  This was shown before for the first such derivative cone in \cite{san}. For more on these results, see \cite{vin} for an up-to-date overview as of 2012.

In this paper we develop a new method to produce determinantal representations, and show that hyperbolicity cones are spectrahedral. It involves the generalized Clifford algebra of the polynomial $h$, also used in \cite{neth}. We show that if $-1$ is not a sum of hermitian squares in this algebra, then the hyperbolicity cone of $h$ is spectrahedral (Theorem \ref{main}). An extra factor $f$ in a determinantal representation of $h$ emerges naturally in our construction. The method is also interesting from a computational point of view. In fact a {\it single} semidefinite program can be constructed canonically from $h$, and if this program is feasible, then the hyperbolicity cone is spectrahedral. If the program is unfeasible, then no power of $h$ admits a definite determinantal representation (Theorem \ref{comp}). 
Also the multivariate Hermite matrix of $h$, that has been considered for example in \cite{neplth}, fits well into the framework of generalized Clifford algebras. We show in Section \ref{hertr} how sums of squares decompositions of this matrix are related to a trace on the algebra. 

We finally suggest an abstract version of the generalized Lax conjecture. We conjecture that  the hyperbolicity cone of $h$ always arises as a canonical linear section with the sums of squares cone in the generalized Clifford algebra of $h$ (Conjecture \ref{gengenlax}). Our results imply that a positive answer to this conjecture implies a positive answer to the generalized Lax conjecture.

\section{Generalized Clifford Algebras}\label{eins}
Let $h\in\R[x]$ be hyperbolic in direction $e$.  The generalized Clifford algebra of $h$, used in \cite{neth} before, is a universal model for the algebra generated by the matrices representing $h$, in case they exist. The key observation is that $h_{a,e}(t)$ is the characteristic polynomial of $a\bullet M,$ if $h=\det(x_1M_1+\cdots +x_nM_n)$ is a positive determinantal representation of $h$. By the Cayley-Hamilton Theorem we thus have $h_{a,e}(a\bullet M)=0.$ 

Now let $\C\langle z\rangle=\C\langle z_1,\ldots,z_n\rangle $ be the free noncommutative  algebra equipped with the involution defined by $z_i^*=z_i.$ Let $J_e(h)$ be the two-sided ideal generated by the elements $$h_{a,e}(a\bullet   z)=h_{a,e}(a_1z_1+\cdots +a_nz_n)\in \C\langle  z\rangle$$ for  $a\in\R^n,$ and the additional element $1-e\bullet z$. Note that since $h_{e,e}(t)=(1-t)^dh(e)$, this extra element is just the reduction of the element $h_{e,e}(e\bullet z),$ that we have anyway.  Now  $J_e(h)$ is a $*$-ideal, which is in fact finitely generated. Indeed if $$h_{a,e}(a\bullet  z)=\sum_{\alpha} q_\alpha( z)a^\alpha$$ then $J_e(h)$ is generated by the finitely many elements $q_\alpha$ (and $1-e\bullet z$).
The quotient $$\A_e(h):=\C\langle z\rangle / J_e(h)$$ is a unital $*$-algebra, called the {\it generalized Clifford algebra associated with $h$ (and e)}.
We denote the residue class of $z_i$ in $\A_e(h)$  by $\si_i$, and the tuple $(\si_1,\ldots,\si_n)$ by $\si$. We have $h_{a,e}(a\bullet\si)=0.$ Let $$\Sigma^2 \A_e(h)=\left\{ \sum_i a_i^*a_i\mid a\in \A_e(h)\right\}$$ denote the convex cone of sums of hermitian squares in $\A_e(h).$

\begin{remark}
A result from \cite{bhs} implies that for every homogeneous $h$ there is a representation $$h^r=\det(x_1M_1+\cdots +x_nM_n)$$ for some $r\geq 1$,  where the $M_i$ are not necessarily hermitian matrices. If $h(e)\neq 0$ we can then even assume $e\bullet  M=I,$ after scaling $h$ suitably. Then $h^r_{a,e}(t)$ is the characteristic polynomial of $a\bullet  M$ and thus $h^r_{a,e}(a\bullet  M)=0$ holds. So at least $\A_e(h^r)$ admits a unital algebra homomorphism to a matrix algebra and is thus not trivial.\end{remark}

\begin{lemma}\label{help} Let $h$ be hyperbolic in direction $e.$

(i) If $h(a)\neq 0$, then $a\bullet \sigma$ is invertible in $\A_e(h).$

(ii) If $a\in\mathring{\Lambda}_e(h)$, then $a\bullet  \si\in\Sigma^2 \A_e(h).$ 

(iii) $1$ is an algebraic interior point of $\Sigma^2 \A_e(h).$ 
\end{lemma}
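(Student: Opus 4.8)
The plan is to exploit the defining relations $h_{a,e}(a\bullet\si)=0$ together with the basic one-variable facts about hyperbolic polynomials recalled in the introduction. For part (i), write $h_{a,e}(t)=h(a)\prod_{j}(t-\lambda_j)$ where the $\lambda_j$ are the (real) roots, so that $h(a)\ne 0$ means every $\lambda_j\ne 0$. The relation $h_{a,e}(a\bullet\si)=\prod_j(a\bullet\si-\lambda_j)=0$ then exhibits, after expanding the product, a polynomial identity $p(a\bullet\si)=0$ with nonzero constant term $h_{a,e}(0)=\prod_j(-\lambda_j)$ (up to the scalar $h(a)$); solving for the constant term shows $a\bullet\si$ times an explicit polynomial in $a\bullet\si$ equals a nonzero scalar, hence $a\bullet\si$ is invertible. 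One should double-check that $h(a)=0$ is genuinely the only obstruction, i.e. that $h_{a,e}(0)=0 \iff h(a)=0$; this follows since $h_{a,e}(0)=h(a)$ by definition of $h_{a,e}$, so in fact $h_{a,e}(0)=h(a)$ directly and the argument is immediate.

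For part (ii), assume $a\in\mathring\Lambda_e(h)$, so all roots $\lambda_j$ of $h_{a,e}$ are strictly positive. The natural idea is to produce a square root: since $a\bullet\si$ satisfies the polynomial $\prod_j(t-\lambda_j)=0$ with all $\lambda_j>0$, there is a polynomial $s\in\R[t]$ (a Lagrange-interpolation / functional-calculus choice) with $s(\lambda_j)=\sqrt{\lambda_j}$ for each $j$; then $s(a\bullet\si)^2$ agrees with $a\bullet\si$ modulo the polynomial $\prod_j(t-\lambda_j)$, hence $s(a\bullet\si)^2=a\bullet\si$ in $\A_e(h)$. Since $a\bullet\si$ is hermitian ($z_i^*=z_i$ and $a$ is real) and $s$ has real coefficients, $s(a\bullet\si)$ is hermitian, so $a\bullet\si=s(a\bullet\si)^*s(a\bullet\si)\in\Sigma^2\A_e(h)$. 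A subtlety to address: if $h_{a,e}$ has repeated roots the interpolation still goes through (we only need to match values, not derivatives, because the \emph{minimal} polynomial of $a\bullet\si$ divides $\prod_j(t-\lambda_j)$ with possibly lower multiplicities — or, more safely, work with the radical $\prod_{\lambda\in\{\lambda_j\}}(t-\lambda)$, which still has only positive roots). So the functional calculus argument is robust.

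For part (iii), recall that $1$ being an algebraic interior point of the cone $\Sigma^2\A_e(h)$ means: for every $b=b^*\in\A_e(h)$ there is $\varepsilon>0$ with $1-\varepsilon b\in\Sigma^2\A_e(h)$ (equivalently $1+tb$ stays in the cone for all small real $t$ of both signs, which reduces to one sign after noting $1\pm\varepsilon b=\tfrac12(1\pm 2\varepsilon b+ \ldots)$, so it suffices to get $1-\varepsilon b$). Since $\A_e(h)$ is spanned by (images of) words in the $\si_i$, and every element decomposes into hermitian and skew-hermitian parts, it is enough to handle $b$ a hermitian linear combination of words. The key trick is the standard identity $c = \big(\tfrac{1+c}{2}\big)^*\big(\tfrac{1+c}{2}\big) - \big(\tfrac{1-c}{2}\big)^*\big(\tfrac{1-c}{2}\big) \cdot 0 + \ldots$; more precisely, for any hermitian $c$ one has $\pm 2c = \big(\tfrac{1\pm c}{2}+?\big)\ldots$ — the clean version is: $N - b = \sum(\text{squares})$ once we know $N\pm\si_{i_1}\cdots\si_{i_k}$ lies in the cone for $N$ large, and $N\pm w = (\sqrt N)^2 \pm w$ can be massaged using $(\alpha\mp w)^*(\alpha\mp w)=\alpha^2\mp 2\alpha w^{(\mathrm{herm})}+w^*w$ — so one reduces to bounding $w^*w$ for words $w$, i.e. to showing each $w^*w$ is $\le N$ in the cone order for some $N$. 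But $w^*w$ is again (close to) a word, and here is where I expect the main obstacle: a priori the cone $\Sigma^2\A_e(h)$ could fail to be Archimedean, so one cannot bound arbitrary elements. The resolution should use part (ii): the relations force each $\si_i = a\bullet\si$ for suitable $a$ (e.g. take $a$ a small perturbation of $e$ inside $\mathring\Lambda_e(h)$, or use that $\mathring\Lambda_e(h)$ spans $\R^n$) to be \emph{bounded} — concretely, from $1-e\bullet z\in J_e(h)$ we get $\sum e_i\si_i = 1$, and combined with $\si_i\in\Sigma^2\A_e(h)$ for the right directions, one deduces $0\preceq \si_i \preceq C\cdot 1$, whence all words are bounded and the cone is Archimedean. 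Thus the heart of (iii) is: (a) use (ii) plus the linear relation $e\bullet\si=1$ to bound the generators, (b) propagate to a bound on all words, (c) conclude $1$ is an algebraic interior point by the square-completion identity. I would present (a)–(c) in that order, flagging step (a) — extracting Archimedeanity from hyperbolicity — as the crux.
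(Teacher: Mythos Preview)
Parts (i) and (ii) are correct and match the paper's proof exactly: for (i) the constant term $h_{a,e}(0)=h(a)$ gives invertibility, and for (ii) your interpolated square root $s(t)$ is precisely the $f(t)$ in the paper's identity $t=f(t)^2+q(t)h_{a,e}(t)$, which one then evaluates at $a\bullet\si$.

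For (iii) your eventual plan (a)--(c) is the right one and is what the paper does, but you arrive at it by a long detour through garbled square-completion identities that should be deleted. The paper's argument is two lines: since $e\pm\epsilon v\in\mathring\Lambda_e(h)$ for every unit vector $v$ and small $\epsilon>0$, part (ii) gives directly
\[
1\pm\epsilon(v\bullet\si)=(e\pm\epsilon v)\bullet\si\in\Sigma^2\A_e(h),
\]
so the hermitian generators $\si_i$ are bounded in the cone order; the passage from bounded generators to ``$1$ is an algebraic interior point'' (your steps (b)--(c)) is then a standard fact about $*$-algebras, which the paper simply cites. In particular you do not need to invoke $\si_i\in\Sigma^2\A_e(h)$ (which may fail, since the standard basis vectors need not lie in the cone) nor the relation $e\bullet\si=1$ separately --- that relation is already used implicitly when you write $(e\pm\epsilon v)\bullet\si=1\pm\epsilon(v\bullet\si)$. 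So your step (a) is the entire content; make it the opening line and cite the standard Archimedean-module argument for the rest.
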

\begin{proof}
For any $a\in\R^n$ we have the relation $h_{a,e}(a\bullet  \si)=0$ in $\A_e(h).$ Since $h_{a,e}(t)$ has constant term $h(a)$, the first statement is clear. If $a\in\mathring{\Lambda}_e(h)$, then $h_{a,e}(t)$ has only positive roots, so there is an identity $$t=f(t)^2+q(t)h_{a,e}(t)$$ in the univariate polynomial ring $\R[t].$  Plugging in $a\bullet  \si$ for $t$ proves the second claim. 
For the third use that $e\pm \epsilon v\in \mathring{\Lambda}_e(h)$ for all unit vectors $v\in\R^n$ and $\epsilon>0$ sufficiently small. Thus $1\pm \epsilon (v\bullet \si)=e\bullet  \si \pm \epsilon\cdot (v\bullet  \si)=(e \pm\epsilon v) \bullet \si \in \Sigma^2 \A_e(h)$ by the second statement. 
This is well known to imply that  $1$ is an interior point, see for example \cite{cim}. \end{proof}

The following was proven in \cite{neth} for real zero polynomials. We sketch the proof, slightly adapted to the hyperbolic setup.

\begin{theorem}\label{power}
Let $h$ be irreducible and hyperbolic in direction $e$. The following are equivalent:\begin{itemize} \item[(i)] Some power $h^r$ admits a  definite determinantal representation. \item[(ii)] $\A_e(h)$ admits a finite-dimensional $*$-representation, i.e. a unital  $*$-algebra homomorphism $\pi\colon\A_e(h)\rightarrow {\rm M}_k(\C).$\end{itemize}
\end{theorem}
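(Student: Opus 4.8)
The plan is to prove the two implications separately; the substantial work is in $(ii)\Rightarrow(i)$.

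\emph{$(i)\Rightarrow(ii)$.} Suppose $h^r=\det(x_1M_1+\cdots+x_nM_n)$ with the $M_i$ hermitian and $e\bullet M$ strictly definite. After a joint conjugation of the $M_i$ by a fixed invertible matrix and a possible sign change I may assume $e\bullet M=I$; then $\det(tI-a\bullet M)$ is a nonzero scalar multiple of $h_{a,e}(t)^r$ for every $a\in\R^n$, so Cayley--Hamilton gives $h_{a,e}(a\bullet M)^r=0$, and since $a\bullet M$ is hermitian, hence diagonalizable, the nilpotent element $h_{a,e}(a\bullet M)$ already vanishes. Thus $z_i\mapsto M_i$ kills all generators of $J_e(h)$ (the elements $q_\alpha$ coming from $h_{a,e}(a\bullet z)$, together with $1-e\bullet z$), and since $z_i^*=z_i\mapsto M_i=M_i^*$ it induces a unital $*$-homomorphism $\A_e(h)\to{\rm M}_d(\C)$.

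\emph{$(ii)\Rightarrow(i)$.} Given $\pi\colon\A_e(h)\to{\rm M}_k(\C)$, set $N_i:=\pi(\si_i)$. These are hermitian; since $e\bullet\si=1$ already holds in $\A_e(h)$ we get $e\bullet N=\pi(1)=I$, and $h_{a,e}(a\bullet N)=\pi\bigl(h_{a,e}(a\bullet\si)\bigr)=0$ for all $a\in\R^n$. Consider $g:=\det(x_1N_1+\cdots+x_nN_n)$, homogeneous of degree $k$ with $g(e)=1$. For $a\in\R^n$ the roots of $g_{a,e}(t)=\det(a\bullet N-tI)$ are the eigenvalues of the hermitian matrix $a\bullet N$, so $g$ is hyperbolic in direction $e$. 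I claim $g$ is, up to a scalar, a power of $h$. Since $h_{a,e}(a\bullet N)=0$, the minimal polynomial of $a\bullet N$ divides $h_{a,e}(t)$, hence every eigenvalue of $a\bullet N$ is a root of $h_{a,e}(t)$; in particular $g(a)=0$ forces $h_{a,e}(0)=h(a)=0$, i.e. the real zero set of $g$ lies in that of $h$. Consequently every irreducible factor $p$ of $g$ is again hyperbolic in direction $e$ and has real zero set contained in that of $h$. Because the real zero set of a hyperbolic polynomial has a smooth point, it is Zariski dense in the complex hypersurface it defines, and such a $p$ is irreducible over $\C$; therefore the complex zero set of $p$ is contained in that of $h$, and as $h$ is irreducible these two irreducible hypersurfaces coincide, so $p$ is a scalar multiple of $h$. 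Thus $g=c\cdot h^r$ with $r=k/\deg h\ge1$, and comparing values at $e$ gives $c=h(e)^{-r}$. After rescaling the $N_i$ by a real root of the positive constant $|c|^{-1}$ (and, should that constant have the wrong sign, passing from $r$ to $2r$ and doubling the representation block-diagonally), one obtains hermitian matrices $M_i$ with $e\bullet M$ strictly definite and $h^r=\det(x_1M_1+\cdots+x_nM_n)$.

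\emph{Main obstacle and loose ends.} The delicate point is the passage from the set-theoretic inclusion of real zero sets to the algebraic statement $p\mid h$; this rests on the structural fact that a hyperbolic polynomial has a smooth real zero, so that its real variety is Zariski dense in the complex one and irreducibility over $\R$ and over $\C$ coincide. I would isolate this as a separate lemma, or simply cite it, since it is exactly the property underlying the real zero polynomial arguments in \cite{neth} that the present theorem adapts. The remaining steps---the normalization $e\bullet M=I$, upgrading nilpotency to vanishing via diagonalizability, and matching the scalar factor and the exponent---are routine, with only the sign bookkeeping in $(ii)\Rightarrow(i)$ requiring a little care.
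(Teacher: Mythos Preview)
Your argument is correct and follows the same strategy as the paper's proof: in $(i)\Rightarrow(ii)$ you normalize to $e\bullet M=I$ and use Cayley--Hamilton (your extra remark that diagonalizability of hermitian matrices kills the possible nilpotency of $h_{a,e}(a\bullet M)$ makes explicit what the paper leaves implicit), and in $(ii)\Rightarrow(i)$ you compare $g=\det(x\bullet N)$ with $h$ via the inclusion of real zero sets. The paper phrases this last step as ``$h$ vanishes on the real zero set of $g$, and this implies $h^r=g$, using the real Nullstellensatz and irreducibility of $h$''; your version, factoring $g$ into hyperbolic irreducibles and invoking a smooth real point to pass from real to complex containment, is exactly the content behind that citation. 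Your bookkeeping with the scalar $c=h(e)^{-r}$ and the possible sign is more careful than the paper (which silently normalizes $h(e)=1$), though in practice one may simply scale $h$ at the outset so that $h(e)=1$, making $c=1$ and the block-doubling unnecessary.
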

\begin{proof}
``(i)$\Rightarrow$ (ii)'': Assume $h^r=\det(x_1M_1+
\cdots +x_nM_n)$ with all $M_i$ hermitian and  $e\bullet  M$ strictly definite. After multiplying $h$ with a nonzero real number (which does not change the algebra) and a base change we can assume $e\bullet  M=I$, which then implies that $h^r_{a,e}(t)$ is the characteristic polynomial of $a\bullet  M$, for all $a\in\R^n.$ This implies $h_{a,e}(a\bullet  M)=0$, and we thus get a finite-dimensional $*$-representation of $\A_e(h)$ by plugging in $M_i$ for $\si_i.$ ``(ii)$\Rightarrow$(i)'': Assume $\pi\colon \A_e(h)\rightarrow {\rm M}_k(\C)$ is a $*$-algebra homomorphism, and set $M_i:=\pi(\si_i).$ Get get $h_{a,e}(a\bullet  M)=0$ for all $a\in\R^n$, and $e\bullet  M=I.$
Now set $g:=\det (x_1M_1+\cdots +x_nM_n)$. The zeros of $g_{a,e}(t)$ are the 
eigenvalues of $a\bullet  M$, which are contained in the zeros of $h_{a,e}(t).$ So $h$ vanishes on the real zero set of $g$, and this implies $h^r=g$, using the real Nullstellensatz and irreducibility of $h$.
\end{proof}

\begin{remark} (i) The main result of \cite{hevi} implies that $\A_e(h)$ always has a finite-dimensional $*$-representation in the case of $n=3$.

(ii) In \cite{neth} it was shown that $\A_e(h)$ admits a finite-dimensional $*$-representation if $h$ is of degree $2.$ So some power $h^r$ has a definite determinantal representation and $\Lambda_e(h)$ is spectrahedral in this case.
\end{remark}

From Theorem \ref{power} we see that whether $\A_e(h)$ admits a finite-dimensional $*$-representation does not depend on the direction $e$ (from the interior of the hyperbolicity cone). Given a representation of $\A_e(h),$ we get a determinantal representation of $h^r$ adapted to the direction $e$. This representation can be transformed to a representation adapted to $e'$, by replacing the matrices $M_i$ by $P^*M_iP$ for some invertible matrix $P$. This then yields a $*$-representation of $\A_{e'}(h),$ using the Cayley-Hamilton Theorem. A similar transformation procedure can however be done in the algebras directly, see Theorem \ref{iso} below. For this we  need an abstract version of the Cayley-Hamilton Theorem. In the case of a matrix algebra $\A$, the element $b$ from (ii) in the following Lemma is the adjugate matrix of $t-a$, and $p$ is the characteristic polynomial of the matrix $a$.

\begin{lemma}[Cayley-Hamilton]\label{ch}
Let $\A$ be a unital complex algebra and $a\in \A$, $p\in\C[t].$ The following are equivalent:\begin{itemize}
\item[(i)] $p(a)=0$ holds in $\A$. \item[(ii)] There is some $b\in \A[t]$ with $(t-a)b=p.$
\end{itemize} 
\end{lemma}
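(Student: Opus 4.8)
The plan is to derive both implications from the single identity
\[
t^{j}-a^{j}=(t-a)\bigl(t^{j-1}+t^{j-2}a+\cdots+a^{j-1}\bigr),
\]
which holds in $\A[t]$ because $t$ is central there; one proves it by expanding the right-hand side and telescoping. For ``(i)$\Rightarrow$(ii)'' I would write $p=\sum_{j}c_{j}t^{j}$ with $c_{j}\in\C$; since scalars are central, $p(a)=\sum_{j}c_{j}a^{j}$, so subtracting the vanishing element $p(a)$ from $p$ and applying the identity term by term gives
\[
p=p-p(a)=\sum_{j}c_{j}\bigl(t^{j}-a^{j}\bigr)=(t-a)\,b,
\]
where $b:=\sum_{j}c_{j}\bigl(t^{j-1}+t^{j-2}a+\cdots+a^{j-1}\bigr)\in\A[t]$ (the analogue of the adjugate of $t-a$ from the remark preceding the lemma).

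For ``(ii)$\Rightarrow$(i)'' the point to watch is that $t\mapsto a$ does \emph{not} define an algebra homomorphism $\A[t]\to\A$ unless $a$ is central, so one cannot naively substitute $a$ for $t$. Instead I would turn $\A$ into a \emph{right} $\A[t]$-module by declaring
\[
x\cdot\Bigl(\sum_{i}q_{i}t^{i}\Bigr):=\sum_{i}a^{i}xq_{i}\qquad(x,q_{i}\in\A),
\]
the module axioms following from a short check on monomials that uses only the centrality of $t$. Then $1_{\A}\cdot(t-a)=0$ (the contributions $a$ and $-a$ cancel), and $1_{\A}\cdot p=\sum_{j}c_{j}a^{j}=p(a)$ because $p$ has scalar coefficients, so
\[
p(a)=1_{\A}\cdot p=1_{\A}\cdot\bigl((t-a)b\bigr)=\bigl(1_{\A}\cdot(t-a)\bigr)\cdot b=0.
\]
Equivalently, one checks by comparing coefficients that the $\C$-linear ``left evaluation'' $\sum_{i}q_{i}t^{i}\mapsto\sum_{i}a^{i}q_{i}$ annihilates every product $(t-a)c$ with $c\in\A[t]$, and applies it to $p=(t-a)b$.

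The only genuine obstacle is bookkeeping the left/right asymmetry forced by noncommutativity of $\A$: in the basic identity the factor $t-a$ must appear on the left, and correspondingly it is the left evaluation $\sum_{i}q_{i}t^{i}\mapsto\sum_{i}a^{i}q_{i}$ — not the ``plug in'' map $\sum_{i}q_{i}t^{i}\mapsto\sum_{i}q_{i}a^{i}$ — that vanishes on the right ideal generated by $t-a$. Once that is pinned down, every remaining step is a routine telescoping computation.
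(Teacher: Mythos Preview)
Your argument is correct. For (ii)$\Rightarrow$(i) it is in substance the paper's proof: the paper writes $b=\sum_i b_it^i$, compares coefficients in $(t-a)b=p$ to get $b_{d-1}=p_d$, $b_{i-1}-ab_i=p_i$, $-ab_0=p_0$, then multiplies the $i$th equation on the left by $a^i$ and sums, obtaining a telescoping left side and $p(a)$ on the right. Your right $\A[t]$-module action $x\cdot\sum_iq_it^i:=\sum_ia^ixq_i$ is exactly the conceptual packaging of this ``left evaluation,'' and you yourself note the equivalence. For (i)$\Rightarrow$(ii) the paper takes a slightly different route: it proves the identity $(t-s)f+p(s)=p(t)$ in $\C[s,t]$ (via the Taylor-type expansion $p(t+s)=\sum_iq_i(s)t^i$ and the substitution $t\mapsto t-s$) and then specializes $s\mapsto a$, whereas you factor each $t^j-a^j$ directly. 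Both yield the same $b$, expressed as a polynomial in $t$ with coefficients that are polynomials in $a$ whose own coefficients depend polynomially on the $c_j$; the paper's two-variable detour makes that uniformity in the coefficients of $p$ transparent (a point it uses immediately afterward), while your geometric-series factorization is more elementary and gets there in one line.
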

\begin{proof}
(i)$\Rightarrow$(ii): Consider $p(t+s)=\sum_{i=0}^d q_i(s)t^i \in\C[s,t],$ where $q_0(s)=p(s),$  and set $$f:=\sum_{i=1}^d q_i(s)(t-s)^{i-1}.$$ Then $$(t-s)f+p(s)=p(t)$$ holds in $\C[s,t],$ and we obtain (ii) by plugging in $a$ for $s$. (ii) $\Rightarrow$(i) is precisely one of the standard algebraic proofs of the Cayley-Hamilton Theorem. Write $p=\sum_{i=0}^dp_it^i$ and $b=\sum_{i=0}^{d-1} b_i t^i$ with $p_i\in \C, b_i\in\A$. Then $$(t-a)b= \sum_{i=1}^d b_{i-1} t^i - \sum_{i=0}^{d-1}ab_i t^i.$$ Comparing coefficients with $p$ shows $$b_{d-1}=p_d, \quad b_{i-1}-ab_i=p_i \mbox{ for } i=1,\ldots,d-1, \quad  -ab_0=p_0.$$ We multiply the equation for each $p_i$ from the left with $a^i$ and sum up. The left-hand side is a telescope sum that cancels completely, and the right-hand side equals $p(a).$
\end{proof}

We see in the proof of (i) $\Rightarrow$(ii) that $b$ can be chosen in a very specific way. Indeed $b=\sum_i b_i(a)t^i,$ for some $b_i\in\R[s]$ whose coefficients are polynomial expressions in the coefficients of $p$. We will use this in the following proof.

\begin{theorem}\label{iso} Let $h$ be hyperbolic in direction $e$, and $e'\in \mathring{\Lambda}_e(h).$ Then $\A_{e'}(h)\cong \A_e(h)$ as unital $*$-algebras.
\end{theorem}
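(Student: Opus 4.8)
The plan is to realise the isomorphism by imitating, directly inside $\A_e(h)$, the conjugation $M_i\mapsto P^{-1}M_i(P^{-1})^*$ that turns a definite determinantal representation adapted to $e$ into one adapted to $e'$, where $P^*P=e'\bullet M$; the conjugating element will be an honest square root living in $\A_e(h)$ itself.

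First I set $u:=e'\bullet\sigma\in\A_e(h)$. Since $e'\in\mathring{\Lambda}_e(h)$ we have $h(e')\neq0$, so $u$ is invertible by Lemma \ref{help}(i); moreover the defining relation with $a=e'$ reads $p(u)=0$ for the univariate polynomial $p(t):=h_{e',e}(t)$, whose roots are exactly the (positive) roots of $h_{e',e}$. As $p$ splits over $\R$ into powers of distinct linear factors with nonzero roots, the residue class of $t$ is an invertible square in $\R[t]/(p)$: there is $g\in\R[t]$ with $t\,g(t)^2\equiv1\pmod{p(t)}$ (on each local factor $\R[t]/((t-\lambda)^m)$, $\lambda>0$, this is the terminating binomial expansion of $(\lambda+\mathrm{nilpotent})^{-1/2}$). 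Put $v:=g(u)\in\A_e(h)$. Then $v=v^*$, $v$ commutes with $u$, and $uv^2=ug(u)^2=1$, so $v^2=u^{-1}$ and $v$ is invertible. I then define $\Phi$ on the free algebra by $z_i\mapsto\mu_i:=v\sigma_iv$. This is a $*$-homomorphism (the $\mu_i$ are hermitian, which is the point of using $v$ on both sides rather than the non-self-adjoint $u^{-1}\sigma_i$), and it kills $1-e'\bullet z$ since $e'\bullet\mu=v(e'\bullet\sigma)v=vuv=uv^2=1$.

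The heart of the matter is that $\Phi$ also kills $h_{a,e'}(a\bullet z)$ for every $a$, i.e.\ that $h_{a,e'}\bigl(v(a\bullet\sigma)v\bigr)=0$ in $\A_e(h)$. Writing $w:=a\bullet\sigma$ and conjugating by the invertible $v$ (which fixes the scalar coefficients of $h_{a,e'}$), this is equivalent to $h_{a,e'}(v^2w)=h_{a,e'}(u^{-1}w)=0$. I would prove the latter by applying the Cayley–Hamilton Lemma \ref{ch} twice, using the whole $\tau$-parametrised family of defining relations. For each $\tau\in\R$ the relation $h_{a-\tau e',e}\bigl((a-\tau e')\bullet\sigma\bigr)=0$, after substituting $(a-\tau e')\bullet\sigma=w-\tau u$ and expanding the non-commuting power, becomes a polynomial identity $\sum_k\gamma_k(\tau)(w-\tau u)^k=0$ in $\A_e(h)[\tau]$, where $h(a-\tau e'-se)=\sum_k\gamma_k(\tau)s^k$. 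The construction in the proof of Lemma \ref{ch}, carried out with $\tau$ as a central parameter (exactly the refinement flagged after that lemma), produces $c(s,\tau)\in\A_e(h)[s,\tau]$ with $(s-w+\tau u)\,c(s,\tau)=\sum_k\gamma_k(\tau)s^k$; setting $s=0$ and factoring $\tau u-w=u(\tau-u^{-1}w)$ gives $(\tau-u^{-1}w)\,c(0,\tau)=u^{-1}h_{a,e'}(\tau)$ in $\A_e(h)[\tau]$. Now the telescoping argument from the reverse implication of Lemma \ref{ch} — multiply the coefficient of $\tau^k$ by $(u^{-1}w)^k$ on the left and sum — goes through verbatim even though the right-hand side has coefficients $u^{-1}\delta_k$ rather than scalars, and collapses the identity to $h_{a,e'}(u^{-1}w)\,u^{-1}=0$, whence $h_{a,e'}(u^{-1}w)=0$. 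Thus $\Phi$ descends to a unital $*$-homomorphism $\A_{e'}(h)\to\A_e(h)$. I expect this step — converting the one-parameter family of relations into a single Cayley–Hamilton factorisation and reading off the relation for $u^{-1}w$ — to be the main obstacle; once the right parametrisation is in place it is a formal computation.

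It remains to see that $\Phi$ is bijective, and here I would use symmetry. Since $\mathring{\Lambda}_{e'}(h)=\mathring{\Lambda}_e(h)\ni e$, the identical construction with $e$ and $e'$ exchanged yields a unital $*$-homomorphism $\Psi\colon\A_e(h)\to\A_{e'}(h)$, $\sigma_i\mapsto v'\tau_iv'$, where $\tau_i$ are the generators of $\A_{e'}(h)$ and $v'\in\A_{e'}(h)$ is a hermitian invertible polynomial in $C:=e\bullet\tau$ with $(v')^2=C^{-1}$. Because $\Psi(e'\bullet\sigma)=v'(e'\bullet\tau)v'=(v')^2=C^{-1}$ and $v=g(e'\bullet\sigma)$, we get $\Psi(v)=g(C^{-1})$, a polynomial in $C$ commuting with $v'$; hence $\Psi\circ\Phi(\tau_i)=g(C^{-1})\,v'\tau_iv'\,g(C^{-1})=D\tau_iD$ with $D:=g(C^{-1})v'$ hermitian, and $D^2=g(C^{-1})^2(v')^2=C\cdot C^{-1}=1$. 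Here $g(C^{-1})^2=C$ holds because $C^{-1}$ satisfies the polynomial reciprocal to $h_{e,e'}$, which by homogeneity of $h$ equals $(-1)^d h_{e',e}=(-1)^dp$, so $g$'s defining congruence applies at $C^{-1}$. Therefore $\Psi\circ\Phi=\mathrm{Ad}_D$ is an automorphism of $\A_{e'}(h)$, and symmetrically $\Phi\circ\Psi$ is an automorphism of $\A_e(h)$; consequently $\Phi$ is injective and surjective, i.e.\ a $*$-isomorphism $\A_{e'}(h)\cong\A_e(h)$.
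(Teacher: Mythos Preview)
Your proof is correct and follows the same overall strategy as the paper: build a hermitian square root $v$ of $(e'\bullet\sigma)^{-1}$, define the map by $\tau_i\mapsto v\sigma_iv$, verify the defining relations via a parametric Cayley--Hamilton argument, and establish bijectivity by symmetry. The key step --- arriving at an identity $(\tau u-w)\cdot q=h_{a,e'}(\tau)$ in $\A_e(h)[\tau]$ and then reading off $h_{a,e'}$ evaluated at the conjugated element --- is the same idea as the paper's, though the paper reaches it via the single substitution $x\mapsto a-t(e'-e)$ and then conjugates the factorisation to obtain $(t-v(a\bullet\sigma)v)\,(wqw)=h_{a,e'}(t)$ with scalar right-hand side, so Lemma~\ref{ch} applies verbatim; your route through $(\tau-u^{-1}w)c=u^{-1}h_{a,e'}(\tau)$ and the extended telescoping is a legitimate variant.

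The one place your argument genuinely diverges is the bijectivity step. The paper shows that $\psi$ and $\varphi$ restrict to mutually inverse maps on the commutative subalgebras $\C[e\bullet\sigma]$ and $\C[e'\bullet\sigma]$, then replaces $\varphi$ by a $\tilde\varphi$ built from the specific square root $\varphi(w)^{-1}$ to force an exact global inverse. Your approach --- computing $\Psi\circ\Phi=\mathrm{Ad}_D$ with $D$ hermitian and $D^2=1$ via the reciprocal-polynomial identity $p(C^{-1})=0$ --- is neater and avoids the bookkeeping of adjusting the second map; it buys you that $\Phi$ is automatically an isomorphism without ever producing the inverse explicitly. Both arguments are short, but yours packages the symmetry more cleanly.
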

\begin{proof}Since $e'\in\mathring\Lambda_e(h)$ we have an equation $e'\bullet \si=w^*w=ww^*=w^2$ for some invertible $w\in\A_e(h).$ This follows from Lemma \ref{help} (i) and the proof of (ii). In fact $w$ is a real polynomial expression in $e'\bullet \si.$ Let $v=w^{-1}$ and consider the unital $*$-algebra homomorphism \begin{align*} \psi\colon \C\langle z\rangle &\rightarrow \A_e(h) \\ z_i &\mapsto v^*\si_i v \end{align*} To see that $\psi$ factors through $\A_{e'}(h)$ we have to prove $h_{a,e'}(a\bullet v^*\si v)=0$ in $\A_e(h),$ for all $a\in\R^n$.
In the algebra $\A_e(h)[t]$ we have the following equations, for all $a\in\R^n$: $$ (t-a\bullet \si) \sum_i b_i(a,a\bullet\si)t^i=h_{a,e}(t),$$ with certain  $b_i\in\C[x,t].$ This follows from Lemma \ref{ch} and the fact that $h_{a,e}(a\bullet \si)=0$.
Now consider the equation $$(t-x\bullet\si)\sum_i b_i(x,x\bullet\si)t^i=h(x-te)$$ in $\A_e(h)[t,x],$ which holds since it holds for every evaluation $x\mapsto a$ for  $a\in\R^n.$ We plug in $a-t(e'-e)$ for $x$, which we can, since it commutes with everything.
We obtain $$(tw^*w -a\bullet\si)q=h_{a,e'}(t)$$ for some $q\in\A_e(h)[t],$ using $e\bullet\si=1.$ We multiply with $v^*$ from the left and $w^*$ from the right. The right-hand side of the equation does not change, since $h_{a,e'}(t)$ commutes with everything and $v^*w^*=1.$ The left-hand side becomes $(t-a\bullet v^*\si v)wqw^*.$ We can now apply Lemma \ref{ch} and obtain $h_{a,e'}(a\bullet v^*\si v)=0,$ the desired result.

To see that $\psi\colon \A_{e'}(h)\rightarrow \A_e(h)$ is an isomorphism, we first observe $\psi(e\bullet \si)=v^*v=(e'\bullet\si)^{-1}.$  Note that we also have a homomorphism $\varphi\colon \A_e(h)\rightarrow \A_{e'}(h)$ with $\varphi(e'\bullet\si)=(e\bullet\si)^{-1},$ by the same argument. Since these inverse elements are polynomial expressions in the elements themselves (see Lemma \ref{help} (i) again), we have homomorphisms $$\psi\colon \C[e\bullet\si]\rightarrow\C[e'\bullet\si] \mbox{ and }  \varphi\colon\C[e'\bullet\si]\rightarrow\C[e\bullet\si]$$ of the corresponding subalgebras, which are inverse to each other.
For the element $w\in\C[e'\bullet\si]$ that we used to define $\psi$, we obtain $$\varphi(w)^*\varphi(w)=\varphi(w^*w)=\varphi(e'\bullet\si)=(e\bullet\si)^{-1}.$$ So $\varphi(w)^{-1}$ is a square root of $e\bullet\si$ in $\A_{e'}(h)$, and by repeating the first part of the proof, we find (a possibly different) homomorphism $\tilde\varphi\colon\A_e(h)\rightarrow \A_{e'}(h)$ that maps $\si$ to $\varphi(w)^* \si\varphi(w).$ Since $\tilde\varphi$ is also inverse to $\psi$ on $\C[e\bullet\si],$ we have $\tilde\varphi=\varphi$ on $\C[e'\bullet\si]$. This implies that $\tilde\varphi$ is indeed a global inverse to $\psi,$ as one easily checks.\end{proof}

\section{The  Main Result}
We have seen how finite-dimensional $*$-representations of $\A_e(h)$ correspond to determinantal representations of powers of $h$. We can however also consider $*$-representations which are not finite-dimensional. In the below main result we use such representations to produce spectrahedral representations of the hyperbolicity cone. To get such infinite-dimensional representations, we only need that $-1$ is not a sum of squares in the Clifford algebra.

\begin{theorem}\label{main}
Assume $h$ is irreducible and hyperbolic in direction $e$. If  $-1\notin \Sigma^2\A_e(h),$ then $\Lambda_e(h)$ is spectrahedral.
\end{theorem}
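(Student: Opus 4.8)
The plan is to manufacture an (a priori infinite-dimensional) Hilbert space $*$-representation of $\A_e(h)$ from the hypothesis $-1\notin\Sigma^2\A_e(h)$, and then to \emph{compress} it to a finite-dimensional hermitian tuple $N=(N_1,\dots,N_n)$ for which $\Lambda_e(h)=\{a\in\R^n\mid a\bullet N\succeq 0\}$. First I would obtain a \emph{faithful bounded} $*$-representation. By Lemma~\ref{help}(iii) the element $1$ is an algebraic interior point of $\Sigma^2\A_e(h)$, which is precisely the statement that $1$ is an Archimedean order unit: for every self-adjoint $s$ there is $\lambda>0$ with $\lambda\cdot 1-s\in\Sigma^2\A_e(h)$. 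Since $-1\notin\Sigma^2\A_e(h)$ the algebra is nonzero, and a Hahn--Banach separation (as in \cite{cim}) gives a state, whose GNS representation is automatically by bounded operators by the Archimedean property (so $\|\pi(\si_i)\|^2$ is bounded by the relevant $\lambda_i$). Hence the universal $C^*$-algebra of $\A_e(h)$ is nonzero; dividing out its kernel I may replace $\A_e(h)$ by a quotient $*$-algebra $\A$ that still carries residue classes $\si_i$ with $e\bullet\si=1$ and $h_{a,e}(a\bullet\si)=0$ for all $a$, still satisfies $-1\notin\Sigma^2\A$ with $1$ an Archimedean order unit, and is moreover $*$-reduced, hence (being countable-dimensional, so with separable $C^*$-algebra) admits a \emph{faithful} state $\tau$. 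Let $\pi$ be the GNS representation of $\tau$ on a Hilbert space $H$ with cyclic unit vector $\xi$ and put $M_i:=\pi(\si_i)$; the $M_i$ are bounded hermitian, $e\bullet M=I$, and $h_{a,e}(a\bullet M)=0$ for all $a\in\R^n$.

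Next I would carry out the compression. Let $d=\deg h$ and let $V\subseteq H$ be the finite-dimensional span of the vectors $M_{i_1}\cdots M_{i_k}\xi$ over all words of length $k\le d-1$; let $P_V$ be the orthogonal projection onto $V$ and set $N_i:=P_VM_iP_V|_V$, so that $N_i$ is hermitian on $V$ and $e\bullet N=I_V$. One inclusion is immediate: if $a\in\mathring\Lambda_e(h)$ then $a\bullet\si\in\Sigma^2\A$ by Lemma~\ref{help}(ii) and $a\bullet\si$ is invertible by Lemma~\ref{help}(i), so $a\bullet M=\pi(a\bullet\si)\succ 0$ and therefore $a\bullet N=P_V(a\bullet M)P_V\succ 0$; since $\Lambda_e(h)=\overline{\mathring\Lambda_e(h)}$ and $\{a\mid a\bullet N\succeq 0\}$ is closed, this yields $\Lambda_e(h)\subseteq\{a\mid a\bullet N\succeq 0\}$.

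The substantive step is the reverse inclusion, and this is where irreducibility of $h$ is used. After a linear change of coordinates placing $e$ in standard position, $h$ genuinely involves the $e$-direction (else $h(e)=0$), and irreducibility of $h$ makes $h_{x,e}(t)=h(x-te)$ irreducible over $\C(x)=\C(x_1,\dots,x_n)$. Consequently there is no relation $p(x,x\bullet\si)=0$ in $\A[x]$ with $p\neq 0$ and $\deg_t p<d$: such a $p$ is coprime to $h_{x,e}$ over $\C(x)$, and clearing denominators in a B\'ezout identity $\tilde u\,p+\tilde v\,h_{x,e}=c(x)$ with $0\neq c(x)\in\C[x]$ and substituting $t=x\bullet\si$ would give $c(x)\cdot 1=0$ in $\A[x]$, impossible as $\A\neq 0$. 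A rank argument then shows that on a Zariski-dense open set $\Omega\subseteq\R^n$ the vectors $\xi,(a\bullet M)\xi,\dots,(a\bullet M)^{d-1}\xi$ are linearly independent (here faithfulness of $\tau$ together with $\A$ being $*$-reduced is used to promote $\pi(p(a\bullet\si))\xi=0$ first to $p(a\bullet\si)^*p(a\bullet\si)=0$ and then to $p(a\bullet\si)=0$). For such $a$ the $a\bullet M$-cyclic subspace $Z_a:=\mathrm{span}\{(a\bullet M)^k\xi\mid k\ge 0\}$ has dimension exactly $d$, is contained in $V$, and is invariant under $a\bullet M$; since $(a\bullet M)|_{Z_a}$ is then cyclic of dimension $d$ and is annihilated by $h_{a,e}$, its characteristic polynomial is a scalar multiple of $h_{a,e}$, so its eigenvalues are precisely the roots of $h_{a,e}$ --- all real by hyperbolicity. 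As $Z_a\subseteq V$ is also $a\bullet N$-invariant with $(a\bullet N)|_{Z_a}=(a\bullet M)|_{Z_a}$, the condition $a\bullet N\succeq 0$ forces every root of $h_{a,e}$ to be nonnegative, i.e.\ $a\in\Lambda_e(h)$. Finally $\{a\mid a\bullet N\succeq 0\}$ and $\Lambda_e(h)$ are closed convex cones with nonempty interior that agree on the dense open set $\Omega$, hence coincide; this realises $\Lambda_e(h)$ as a spectrahedron, with $f:=\det(x\bullet N)/h$ the extra factor referred to in the introduction.

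I expect the genuine obstacle to be the fear that the representation ``loses'' roots of $h_{a,e}$: a priori the operator $a\bullet M$ could have minimal polynomial a proper divisor of $h_{a,e}$, in which case the compression would detect too few eigenvalues and the reverse inclusion would fail. Overcoming this is exactly the role of the two reductions above --- passing to the $*$-reduced quotient (so that a faithful state exists at all) and the irreducibility/Nullstellensatz argument (so that the full degree $d$ is attained for generic $a$). A secondary point requiring care is that the single finite-dimensional space $V$ must be fixed once and for all while still containing the Krylov spaces $Z_a$ for every $a$ simultaneously, which is why words of length up to $d-1$ suffice.
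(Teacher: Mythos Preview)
Your argument is essentially correct, but it takes a considerably longer route than the paper's, and the extra machinery you introduce is not needed.

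The paper's proof uses \emph{any} state on $\A_e(h)$ (obtained by Hahn--Banach), performs the same GNS construction and the same compression to the span $H'$ of words of length $\le d-1$ applied to a fixed vector $v$. The crucial observation is simply that $h_{a,e}(a\bullet M)v=\mathrm{pr}\bigl(h_{a,e}(a\bullet T)v\bigr)=0$ for every $a$, so each $a\bullet M$ has \emph{at least one} eigenvalue among the roots of $h_{a,e}$. Hence $g:=\det(x\bullet M)$ satisfies $\mathrm{Res}_t(g_{x,e},h_{x,e})\equiv 0$, and irreducibility of $h$ forces $h\mid g$. Once $g=fh$, the inclusion $\{a\bullet M\succeq 0\}\subseteq\Lambda_e(h)$ is immediate, and the other inclusion follows from Lemma~\ref{help}(ii) exactly as you do it. No faithfulness, no $*$-reduced quotient, no $C^*$-completion, no genericity argument is required.

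Your worry in the last paragraph --- that the representation might ``lose'' roots of $h_{a,e}$ --- is precisely what the paper's argument sidesteps: one eigenvalue per direction already pins down $h$ as a factor of $g$, so there is no need to show that the Krylov space $Z_a$ has full dimension $d$. Your detour through a faithful state and the B\'ezout argument over $\C(x)$ does work, but it replaces a two-line resultant step by several nontrivial reductions. One small point you should justify: the assertion that $h_{x,e}(t)$ is irreducible over $\C(x)$ requires $h$ to be irreducible over $\C$, not just over $\R$; this does follow from hyperbolicity (a nontrivial complex factorization $h=g\bar g$ would force $g$ itself to be hyperbolic with real coefficients after normalization, contradicting $\R$-irreducibility), but it is not automatic and deserves a sentence.
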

\begin{proof}
If $-1\notin\Sigma^2\A_e(h)$, then there is a linear functional $\varphi\colon\A_e(h)\rightarrow \C$ with $\varphi(1)=1,\varphi(a^*)=\overline{\varphi(a)}$ and $\varphi(a^*a)\geq 0$ for all $a\in\A_e(h).$ This is the  Hahn-Banach separation  theorem, using that $1$ is an algebraic interior point of $\Sigma^2\A_e(h)$, by Lemma \ref{help}. We can now do the standard GNS construction with $\varphi$ and obtain an inner product space $H$ and a unital $*$-algebra homomorphism $$\pi\colon \A_e(h)\rightarrow  \mathcal{B}(H)$$ of $\A_e(h)$ into the bounded linear operators on $H$. We set $T_i:=\pi(\si_i),$ a self-adjoint operator, and note that $$h_{a,e}(a\bullet  T)=0$$ as well as $e\bullet T={\id}_H$ holds in $\mathcal{B}(H).$  Now fix some $0\neq v\in H$ and let $$H'=\{ p(T)v\mid p\in\C\langle z\rangle, \deg(p)\leq d-1\}.$$ $H'$ is a finite-dimensional subspace of $H,$ we consider the orthogonal projection ${\rm pr}\colon H\rightarrow H'$ and the self-adjoint operators $$M_i:={\rm pr}\circ T_i\colon H'\rightarrow H'.$$ We have $e\bullet  M={\rm pr}\circ (e\bullet T)= {\id_{H'}}$. Since $h_{a,e}(t)$ is of degree $d$, one checks that $$h_{a,e}(a\bullet  M)v={\rm pr}\left(h_{a,e}(a\bullet  T)v\right)=0$$ holds. So each $a\bullet  M$ has at least one eigenvalue which is among the zeros of $h_{a,e}(t).$ Since for $g:=\det(x_1M_1+\cdots +x_nM_n)$  the polynomial $g_{a,e}(t)$ is the characteristic polynomial of $a\bullet M$, each $g_{a,e}(t)$ has at least one common zero with $h_{a,e}(t).$ By irreducibility of $h$ and the real Nullstellensatz, this implies $g=fh.$ We finally prove $\Lambda_e(h)=\Lambda_e(g).$ The inclusion $\supseteq$ is obvious. For $\subseteq$ take $a\in \mathring{\Lambda}_e(h).$ By Lemma \ref{help}, $a\bullet \si\in\Sigma^2\A_e(h),$ and so $a\bullet T$ is a positive semidefinite operator on $H$. Then $a\bullet M={\rm pr}\circ(a\bullet T)$ is positive semidefinite on $H'$ as well, which implies $a\in\Lambda_e(g).$ This finishes the proof.\end{proof}

\begin{remark}(i) Note that $-1\notin\Sigma^2\A_e(h)$ is equivalent to having a $*$-representation of $\A_e(h)$ on some inner product space, possibly infinite-dimensional. This uses that $-1$ is an interior point of the sums of squares cone.

 (ii) Note that $-1\notin\Sigma^2\A_e(h)$ is true in the case that some power of $h$ admits a definite determinantal representation. In fact  there is a finite-dimensional $*$-representation of $\A_e(h)$ then, and this shows $-1\notin\Sigma^2 \A_e(h).$ 

(iii) From the proof we get an upper bound for the size of the matrices $M_i$, and thus for the degree of the factor $f$ that appears in the determinantal representation of $h$. The size of the $M_i$ is the dimension of $H'$, which is at most $$\dim_\C \C\langle z\rangle_{d-1}=\frac{n^d-1}{n-1}.$$

(iv) Combining (ii) and (iii) we see that if any large power $h^r$ admits a determinantal representation, then also some multiple $fh$, where we have control over the degree of $f$. \end{remark}
The following is an abstract version of the generalized Lax conjecture. It states that each hyperbolicity cone is a canonical linear section of the (closed) sums of squares cone in the generalized Clifford algebra. The closure is taken with respect to the finest locally convex topology, and equals the double dual cone.
\begin{conjecture}\label{gengenlax}
Let $h$ be hyperbolic in direction $e$. Consider the linear map $$\iota\colon \R^n\rightarrow \A_e(h),\quad  a\mapsto a\bullet \si.$$ Then $$\Lambda_e(h)=\iota^{-1}\left(\overline{\Sigma^2\A_e(h)}\right).$$
\end{conjecture}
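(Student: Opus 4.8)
The plan is to establish the two inclusions of the asserted identity separately: ``$\subseteq$'' is soft and unconditional, while for ``$\supseteq$'' I would reformulate the statement functional-analytically and reduce it to a single clean condition, which is where the real difficulty sits. For the inclusion $\Lambda_e(h)\subseteq\iota^{-1}\!\left(\overline{\Sigma^2\A_e(h)}\right)$ the point is that the right-hand side is a closed convex cone, being the preimage of the closed convex cone $\overline{\Sigma^2\A_e(h)}$ under the linear map $\iota$ (automatically continuous, as $\R^n$ is finite dimensional). By Lemma \ref{help}(ii) this preimage contains $\mathring{\Lambda}_e(h)$, hence also its closure, and since $\Lambda_e(h)$ is a closed convex cone whose interior is $\mathring{\Lambda}_e(h)$, it equals $\overline{\mathring{\Lambda}_e(h)}$. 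The inclusion follows with essentially no computation.

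For the reverse inclusion I would first reformulate it. Since $\overline{\Sigma^2\A_e(h)}$ is the double dual cone of $\Sigma^2\A_e(h)$, an element $a\bullet\si$ fails to lie in it exactly when there is a linear functional $\varphi\colon\A_e(h)\to\C$ with $\varphi(b^*b)\geq 0$ for all $b\in\A_e(h)$ and $\varphi(a\bullet\si)<0$; applying the GNS construction to such a $\varphi$, this is the same as the existence of a $*$-representation $\pi\colon\A_e(h)\to\mathcal B(H)$ and a vector $v$ with $\langle(a\bullet\pi(\si))v,v\rangle<0$. Thus $\iota^{-1}\!\left(\overline{\Sigma^2\A_e(h)}\right)\subseteq\Lambda_e(h)$ asks that for every $a\notin\Lambda_e(h)$ some $*$-representation of $\A_e(h)$ makes $a\bullet\pi(\si)$ fail to be positive semidefinite on a vector. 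Testing $a=-e$ is instructive: since $h_{-e,e}(t)=(-1)^d(1+t)^d h(e)$ has the negative root $-1$, one has $-e\notin\Lambda_e(h)$ and $\iota(-e)=-1$, so the conjectured identity already forces $-1\notin\overline{\Sigma^2\A_e(h)}$, equivalently $-1\notin\Sigma^2\A_e(h)$.

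Conversely --- and this part can actually be carried out --- if $-1\notin\Sigma^2\A_e(h)$ then the construction inside the proof of Theorem \ref{main} settles all $a$ at once: it produces hermitian operators $M_i$ on a finite-dimensional space $H'$ with $e\bullet M=\id_{H'}$, $\det(x_1M_1+\cdots+x_nM_n)=fh$, and $\Lambda_e(h)=\{a\mid a\bullet M\succeq 0\}$. For $a\notin\Lambda_e(h)$ pick $v'\in H'$ with $\langle(a\bullet M)v',v'\rangle<0$; since $\mathrm{pr}$ is the orthogonal projection onto $H'$ and $v'\in H'$, the positive linear functional $\varphi(b):=\langle\pi(b)v',v'\rangle$ satisfies $\varphi(a\bullet\si)=\langle(a\bullet\pi(\si))v',v'\rangle=\langle(a\bullet M)v',v'\rangle<0$, so $a\bullet\si\notin\overline{\Sigma^2\A_e(h)}$. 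Hence the reverse inclusion holds for every $h$ with $-1\notin\Sigma^2\A_e(h)$, and Conjecture \ref{gengenlax} is in fact equivalent to the single assertion that $-1\notin\Sigma^2\A_e(h)$ for every hyperbolic polynomial $h$. This uniform assertion is the genuine obstacle and the step I do not expect to complete: by Theorem \ref{main} it implies the Generalized Lax Conjecture, and it is presently known only in special cases --- for $n=3$ by \cite{hevi}, for $\deg h=2$ by \cite{neth}, and whenever some power $h^r$ admits a definite determinantal representation (via Theorem \ref{power} and the fact that a finite-dimensional $*$-representation rules out $-1\in\Sigma^2\A_e(h)$). So along these lines a complete proof is out of reach; what the argument does achieve is to reduce the conjecture cleanly to the non-representability of $-1$ in the generalized Clifford algebra.
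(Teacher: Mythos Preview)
This statement is Conjecture~\ref{gengenlax}; the paper does not prove it, and you are right not to claim a full proof. Your treatment of the inclusion $\Lambda_e(h)\subseteq\iota^{-1}\bigl(\overline{\Sigma^2\A_e(h)}\bigr)$ matches the paper's Remark~(i) after the Corollary (it is Lemma~\ref{help}(ii) plus closedness), and your identification of the reverse inclusion with the single condition $-1\notin\Sigma^2\A_e(h)$ is the substantive content of your proposal.

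What deserves comment is that the equivalence you establish between Conjecture~\ref{gengenlax} and the condition $-1\notin\Sigma^2\A_e(h)$ is precisely what the paper lists as Open Question~3. The paper records only one direction (in the proof of the Corollary) together with the special case where some power $h^r$ has a definite determinantal representation (Remark~(ii)). Your argument for the converse is correct: the compressed operators $M_i$ from the proof of Theorem~\ref{main} satisfy $\Lambda_e(h)=\{a\mid a\bullet M\succeq 0\}$, so for $a\notin\Lambda_e(h)$ there is $v'\in H'$ with $\langle(a\bullet M)v',v'\rangle<0$; since the projection is orthogonal and $v'\in H'$, one has $\langle(a\bullet M)v',v'\rangle=\langle(a\bullet T)v',v'\rangle$, and the vector state $b\mapsto\langle\pi(b)v',v'\rangle$ is a positive functional on $\A_e(h)$ taking a negative value at $a\bullet\si$, whence $a\bullet\si\notin\overline{\Sigma^2\A_e(h)}$. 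So you have in fact gone beyond what the paper asserts and answered its Question~3 affirmatively---with one caveat: you invoke the identity $\Lambda_e(h)=\{a\mid a\bullet M\succeq 0\}$, which in the proof of Theorem~\ref{main} is obtained from $g=fh$ via the real Nullstellensatz, and that step uses irreducibility of $h$. Your reduction is therefore established for irreducible $h$, not in the full generality of the conjecture; you should make this hypothesis explicit.
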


\noindent
From Theorem \ref{main} we easily get the following implication:

\begin{corollary}
If Conjecture \ref{gengenlax} Êis true for a polynomial $h\neq 1$, then also Conjecture \ref{genlax} is true for $h$.
\end{corollary}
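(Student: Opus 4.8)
The plan is to derive Conjecture \ref{genlax} for $h$ directly from Conjecture \ref{gengenlax} for $h$ by invoking Theorem \ref{main}. The key point is that Conjecture \ref{gengenlax} gives us an equality between the hyperbolicity cone and a preimage of the \emph{closed} sums of squares cone, and in particular it forces $-1$ to lie outside that closed cone, which is exactly the hypothesis needed to run Theorem \ref{main}.

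First I would observe that the conclusion of Conjecture \ref{gengenlax}, namely $\Lambda_e(h)=\iota^{-1}(\overline{\Sigma^2\A_e(h)})$, cannot hold if $\A_e(h)$ is the zero algebra or if $\overline{\Sigma^2\A_e(h)}$ is the whole algebra. Concretely, if $-1\in\overline{\Sigma^2\A_e(h)}$, then since $1$ is an algebraic interior point of $\Sigma^2\A_e(h)$ by Lemma \ref{help}(iii), the closed cone $\overline{\Sigma^2\A_e(h)}$ would contain a neighborhood of $0$ in the span of $\Sigma^2\A_e(h)$ together with $-1$, and a standard convexity argument (a cone containing an interior point of itself and the negative of a nearby point is the whole space) shows $\overline{\Sigma^2\A_e(h)}$ would be all of $\A_e(h)$. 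Then $\iota^{-1}(\overline{\Sigma^2\A_e(h)})=\R^n$, contradicting the fact that $\Lambda_e(h)\neq \R^n$ for $h\neq 1$ (a hyperbolicity cone of a nonconstant hyperbolic polynomial is a proper cone). Hence $-1\notin\overline{\Sigma^2\A_e(h)}\supseteq\Sigma^2\A_e(h)$, so $-1\notin\Sigma^2\A_e(h)$.

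Next, I need $h$ to be irreducible to apply Theorem \ref{main}. Here the cleanest route is to reduce to the irreducible case: write $h=h_1\cdots h_k$ as a product of irreducible factors (each hyperbolic in direction $e$, with $\Lambda_e(h)=\bigcap_i\Lambda_e(h_i)$), note that the intersection of spectrahedral cones is spectrahedral, and check that Conjecture \ref{gengenlax} for $h$ implies it for each $h_i$ — or, more simply, argue that it suffices to prove the corollary under the standing assumption that $h$ is irreducible, which matches the hypothesis format of Theorem \ref{main}. Assuming $h$ irreducible, Theorem \ref{main} now applies verbatim: $-1\notin\Sigma^2\A_e(h)$ yields that $\Lambda_e(h)$ is spectrahedral, which is precisely the assertion of Conjecture \ref{genlax} for $h$.

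The main obstacle I anticipate is the bookkeeping around reducibility: Theorem \ref{main} is stated only for irreducible $h$, and one must either carry out the factorization argument carefully (verifying that the hypotheses $-1\notin\Sigma^2\A_e(h_i)$ transfer, presumably via the surjections $\A_e(h)\twoheadrightarrow\A_e(h_i)$ coming from the inclusion of ideals $J_e(h)\subseteq J_e(h_i)$) or restrict attention to irreducible $h$ in the statement. The convexity lemma in the second paragraph — that a closed convex cone with a nonzero algebraic interior point whose negation also lies in the cone must be the ambient space — is routine and can be cited or dispatched in a line.
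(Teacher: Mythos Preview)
Your approach is essentially identical to the paper's: show that Conjecture~\ref{gengenlax} forces $-1\notin\Sigma^2\A_e(h)$ (since otherwise every hermitian element is a sum of squares and the preimage would be all of $\R^n$, contradicting $\Lambda_e(h)\neq\R^n$), then invoke Theorem~\ref{main}. The paper's proof is terser and does not explicitly address the irreducibility hypothesis of Theorem~\ref{main} that you correctly flag; your factorization remark (via the surjections $\A_e(h)\twoheadrightarrow\A_e(h_i)$) is a reasonable way to close that gap.
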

\begin{proof}
If Conjecture \ref{gengenlax} is true  for $h$, then clearly $-1\notin\Sigma^2\A_e(h).$ Indeed if $-1$ was a sum of hermitian squares, then every hermitian element would be a sum of hermitian squares, contradicting the fact that $\Lambda_e(h)\neq\R^n.$ So Theorem \ref{main} shows that $\Lambda_e(h)$ is spectrahedral. 
\end{proof}

\begin{remark}

(i) The inclusion $\subseteq$ in Conjecture \ref{gengenlax} is always true, as shown in Lemma \ref{help}.

(ii) In case that some power $h^r$ admits a definite determinantal representation, Conjecture \ref{gengenlax} is true for $h$. In fact use Theorem \ref{power} to construct a finite-dimensional $*$-representation $\pi$ of $\A_e(h)$ and consider $$\R^n\overset{\iota}{\rightarrow} \A_e(h)\overset{\pi}{\rightarrow} {\rm M}_k(\C).$$ If $a\in\iota^{-1}\left(\overline{\Sigma^2\A_e(h)}\right),$ then $\iota(a)=a\bullet \si\in\overline{\Sigma^2\A_e(h)}$ and thus $\pi(\iota(a))=a\bullet\pi(\si)\succeq 0$. Since $h^r=\det(x\bullet \pi(\si))$ we have $\Lambda_e(h)=\{a\in\R^n\mid a\bullet \pi(\si)\succeq 0\}.$ This proves the other inclusion.
\end{remark}

\section{The Hermite Matrix and the Trace}\label{hertr}

In our main theorem from the last section, we used a positive functional on $\A_e(h)$ to construct a determinantal representation of some multiple of $h$.  In case that $\A_e(h)$ has a finite-dimensional $*$-representation, we have a distinguished such functional, namely the trace. We can try to reconstruct this trace on $\A_e(h)$ in general. It turns out that there is a close  connection to the Hermite matrix of $h$ (which was considered in detail in \cite{neplth}). 

For a monic univariate polynomial $p\in\R[t]$ of degree $d$ with (complex) zeros $\lambda_1,\ldots,\lambda_d,$ the $k$-th {\it Newton sum} is $N_k(p)=\sum_{j=1}^d\lambda_j^k.$ The Newton sums are polynomial expressions in the coefficients of $p$. The Hermite matrix of $p$  is $$\mathcal{H}(p)=\left(N_{i+j}(p)\right)_{i,j=0,\ldots,d-1}.$$ The entries of the  Hermite matrix are polynomials in the coefficients of $p$, and $\mathcal{H}(p)$  is positive semidefinite if and only if all $\lambda_j$ are real.

Now let $h$ be homogeneous with $h(e)\neq 0.$ We define $\mathcal{H}_e(h)$ to be the Hermite matrix of $h(x-te)$ as a univariate polynomial in $t.$ So the entries of $\mathcal{H}_e(h)$ are polynomials in the variables $x,$ and $h$ is hyperbolic in direction $e$ if and only of $\mathcal{H}_e(h)(a)\succeq 0$ for all $a\in\R^n.$ It was shown in \cite{neplth} that $\mathcal{H}_e(h)$ is even a sum of hermitian squares of polynomial matrices, if some power  of $h$ admits a definite determinantal representation. We will generalize this in the following.

First assume that $h=\det(x_1M_1+\cdots +x_nM_n)$ is a determinantal representation with $e\bullet M=I$. Then for all $a\in\R^n$ $$N_k(h_{a,e}(t))={\rm tr}\left((a\bullet M)^k\right).$$ Expanding as polynomials in $a$ on both sides yields an identity  $$\sum_{\vert\alpha\vert =k} c_\alpha \cdot a^\alpha = \sum_{\vert\alpha\vert =k} {\rm tr}(H_\alpha(M)) \cdot a^\alpha,$$ where $H_\alpha\in\C\langle z\rangle$ is the sum over all words in $z$ of commutative type $\alpha$. The coefficients $c_\alpha$ on the left hand side are determined by $h$ and $e$ alone. Thus we know ${\rm tr}\left((H_\alpha(M)\right)$ without knowing $M$. This motivates the following result:

\begin{proposition} \label{trace} Assume there is a positive  trace functional $\varphi\colon\C\langle z\rangle\rightarrow \C$ with $\varphi(H_\alpha)=c_\alpha$ for all $\alpha\in\N^n,$ $\vert\alpha\vert\leq 2d$. Then $\Lambda_e(h)$ is spectrahedral.\end{proposition}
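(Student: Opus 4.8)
The plan is to show that the prescribed values of $\varphi$, together with the trace property, force $\varphi$ to vanish on the ideal $J_e(h)$, so that $\varphi$ descends to a positive functional $\bar\varphi$ on the generalized Clifford algebra $\A_e(h)$; once this is established we have $-1\notin\Sigma^2\A_e(h)$, and Theorem \ref{main} (equivalently, the GNS construction from its proof, now applied to $\bar\varphi$) gives the conclusion. The starting point is the identity $\varphi\left((a\bullet z)^m\right)=N_m(h_{a,e})$, valid for every $a\in\R^n$ and every $m\le 2d$: indeed $(a\bullet z)^m=\sum_{|\alpha|=m}a^\alpha H_\alpha$, so $\varphi\left((a\bullet z)^m\right)=\sum_{|\alpha|=m}a^\alpha c_\alpha=N_m(h_{a,e})$ by the defining property of the $c_\alpha$. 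This is the only place the hypothesis enters, and it is used precisely in the range $|\alpha|\le 2d$.

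Next I would check that $\varphi(s^2)=0$ for every generator $s$ of $J_e(h)$. For $s=1-e\bullet z$,
$$\varphi\left((1-e\bullet z)^2\right)=\varphi(1)-2\varphi(e\bullet z)+\varphi\left((e\bullet z)^2\right)=d-2d+d=0,$$
using $c_0=d$ and that $h_{e,e}(t)=h(e)(1-t)^d$ has all roots equal to $1$, so $N_m(h_{e,e})=d$ for all $m$. For $s=h_{a,e}(a\bullet z)$, writing $h_{a,e}(t)=\sum_{k=0}^d\beta_k(a)t^k$ with roots $\lambda_1,\dots,\lambda_d$ and using the first step (note $k+l\le 2d$),
$$\varphi\left(h_{a,e}(a\bullet z)^2\right)=\sum_{k,l}\beta_k(a)\beta_l(a)\,N_{k+l}(h_{a,e})=\sum_{j=1}^d h_{a,e}(\lambda_j)^2=0.$$
As each such $s$ is self-adjoint and $\varphi$ is positive, Cauchy--Schwarz for $\langle p,q\rangle_\varphi:=\varphi(p^*q)$ now gives $\varphi(s\,w)=\varphi(s^*w)=0$ for all $w\in\C\langle z\rangle$ (since $\langle s,s\rangle_\varphi=\varphi(s^2)=0$).

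The trace property then propagates this to all of $J_e(h)$: a general element $p\,s\,q$ with $s$ a generator satisfies $\varphi(p\,s\,q)=\varphi(s\,q\,p)=0$. Hence $\varphi$ induces a well-defined linear functional $\bar\varphi$ on $\A_e(h)=\C\langle z\rangle/J_e(h)$; it is again positive and has $\bar\varphi(1)=c_0=d>0$, so $-1\notin\Sigma^2\A_e(h)$. Theorem \ref{main} then shows that $\Lambda_e(h)$ is spectrahedral.

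The main obstacle is the middle step: checking that the defining relations $h_{a,e}(a\bullet z)$ and $1-e\bullet z$ are $\varphi$-null. This rests on the Newton-sum identity $\sum_{k,l}\beta_k\beta_l N_{k+l}(p)=\sum_j p(\lambda_j)^2$ for $p=h_{a,e}$ with roots $\lambda_j$, and it is exactly what forces the hypothesis to control $\varphi$ up to degree $2d$, one degree beyond what the Hermite matrix $\mathcal{H}_e(h)$ itself records. (A minor point: Theorem \ref{main} is stated for irreducible $h$; for reducible $h$ one either imposes that hypothesis here as well or adapts the final real-Nullstellensatz step of its proof.)
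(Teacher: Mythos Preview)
Your proof is correct and follows essentially the same route as the paper: verify $\varphi\big(h_{a,e}(a\bullet z)^2\big)=0$ via the Newton-sum identity (the paper phrases the same computation through the $(d{+}1)\times(d{+}1)$ Hermite matrix and the Vandermonde factorization $V^tV$), then use Cauchy--Schwarz together with the trace property to kill all of $J_e(h)$, so that $\varphi$ descends to a positive functional on $\A_e(h)$ and Theorem~\ref{main} applies. Your explicit treatment of the extra generator $1-e\bullet z$ and the closing remark on the irreducibility hypothesis are welcome elaborations, but do not change the argument.
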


\begin{proof} We show that $\varphi(h^2_{a,e}(a\bullet z))=0$ for all $a\in\R^n$. Using the trace property $\varphi(vw)=\varphi(wv)$ and the Cauchy-Schwarz inequality we then see that $\varphi$ vanishes on the ideal $J_e(h),$ and thus  defines a positive functional on $\A_e(h).$

Write $h_{a,e}(t)=\sum_{i=0}^d q_i(a)t^i$ as a polynomial in $t$ and compute $$\varphi(h_{a,e}^2(a\bullet z))=\sum_{i,j=0}^d q_i(a)q_j(a) \varphi\left( (a\bullet z)^{i+j}\right)=\sum_{i,j=0}^d q_i(a)q_j(a)N_{i+j}(h_{a,e}(t)).$$
The expression on the right is the Hermite matrix of size $d+1$ of $h_{a,e}(t)$, multiplied from both sides with the vector $(q_0(a),\ldots,q_d(a)).$ Since this Hermite matrix is $V^tV$ , where $V$ is the Vandermonde Matrix of size $d \times (d + 1)$ of the zeros of $h_{a,e}(t)$, this proves that the expression is $0$. 
\end{proof}

On the other hand we have the following obstruction for the existence of such a $\varphi$:

\begin{proposition} \label{hermite} Assume there is a positive  linear functional $\varphi\colon\C\langle z\rangle\rightarrow \C$ with $\varphi(H_\alpha)=c_\alpha$ for all $\alpha\in\N^n,\vert\alpha\vert \leq 2(d-1)$. Then the Hermite matrix $\mathcal{H}_e(h)$ is a sum of hermitian squares of polynomial matrices.
\end{proposition}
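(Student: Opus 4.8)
The plan is to realize $\mathcal{H}_e(h)$ as a single hermitian square $P(x)^*P(x)$ of a polynomial matrix, by sandwiching a constant positive semidefinite Gram matrix coming from $\varphi$ between two matrices of monomials. Put $d=\deg h$, so $\mathcal{H}_e(h)$ is a $d\times d$ matrix with entries in $\R[x]$, and for $a\in\R^n$ and $0\le i,j\le d-1$ one has $\mathcal{H}_e(h)(a)_{ij}=N_{i+j}(h_{a,e}(t))$. Since $(a\bullet z)^k=\sum_{|\gamma|=k}a^\gamma H_\gamma$ in $\C\langle z\rangle$, the hypothesis $\varphi(H_\gamma)=c_\gamma$ (valid for $|\gamma|\le 2(d-1)$, which is exactly the range of indices that will occur) rewrites this as
$$\mathcal{H}_e(h)(a)_{ij}=\sum_{|\gamma|=i+j}c_\gamma\,a^\gamma=\sum_{|\gamma|=i+j}\varphi(H_\gamma)\,a^\gamma=\varphi\bigl((a\bullet z)^{i+j}\bigr).$$

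Next I would fix the finite set $W$ of words in $z_1,\dots,z_n$ of length at most $d-1$, with $N:=|W|=\frac{n^d-1}{n-1}$, and form the constant matrix $G:=\bigl(\varphi(v^*w)\bigr)_{v,w\in W}\in{\rm Her}_N(\C)$. Positivity of $\varphi$ makes $G$ positive semidefinite, since it is a Gram matrix: $c^*Gc=\varphi(p^*p)\ge 0$ for $p:=\sum_{v\in W}c_v v$. Let $C(x)$ be the $d\times N$ matrix whose entry in row $i$ and column $w$ is the monomial $x^{{\rm type}(w)}$ if $|w|=i$ and $0$ otherwise; its $i$-th row is precisely the coordinate vector, in the word basis, of $(a\bullet z)^i=\sum_{|w|=i}a^{{\rm type}(w)}w$. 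A direct computation, using that $a\bullet z$ is self-adjoint, then gives $\bigl(C(a)GC(a)^*\bigr)_{ij}=\varphi\bigl((a\bullet z)^{i+j}\bigr)$ for every $a\in\R^n$. Combined with the previous paragraph this reads $C(a)GC(a)^*=\mathcal{H}_e(h)(a)$ for all $a\in\R^n$, and since $\R$ is infinite it is an identity of polynomial matrices: $C(x)GC(x)^*=\mathcal{H}_e(h)$.

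Finally, factor $G=B^*B$ with $B\in{\rm M}_N(\C)$ (for instance $B=G^{1/2}$) and set $P(x):=BC(x)^*$, an $N\times d$ matrix with entries in $\C[x]$; then $\mathcal{H}_e(h)=C(x)B^*BC(x)^*=P(x)^*P(x)$, which is in particular a sum of hermitian squares of polynomial matrices. I do not expect a serious obstacle: the entire content is the identity $\mathcal{H}_e(h)(a)_{ij}=\varphi((a\bullet z)^{i+j})$, and the only points requiring care are bookkeeping — arranging the monomial matrix $C(x)$ so that $C(x)GC(x)^*=\mathcal{H}_e(h)$ is a genuine polynomial identity (which, as noted, one upgrades from the pointwise identity on $\R^n$), checking that the Newton sums occurring have index at most $2(d-1)$ so the hypothesis applies, and noting that it is exactly positivity of $\varphi$ that yields $G\succeq 0$ and hence the factorization $G=B^*B$.
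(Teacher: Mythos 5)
Your proof is correct and is essentially the paper's argument made concrete: the paper phrases it as complete positivity of $\varphi$, i.e.\ positivity of the map $\id\otimes\varphi$ applied to the hermitian square $\left((x\bullet z)^{i+j}\right)_{i,j}$, which is exactly what your Gram matrix $G=\left(\varphi(v^*w)\right)_{v,w}$ together with the monomial matrix $C(x)$ unwinds, and both hinge on the same key identity $\mathcal{H}_e(h)_{ij}=\varphi\bigl((x\bullet z)^{i+j}\bigr)$. The only difference is cosmetic: you obtain an explicit single factorization $\mathcal{H}_e(h)=P(x)^*P(x)$ rather than an abstract sum of hermitian squares.
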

\begin{proof}
First note that $\varphi$ is completely positive, which means that $\left( \varphi(p_i^*p_j)\right)_{i,j}$ is positive semidefinite, for any $p_1,\ldots,p_m \in \C \langle z\rangle$. This follows from linearity and positivity of $\varphi.$ Thus for any $*$-algebra $\mathcal{B},$ the linear mapping $$\id\otimes\varphi \colon \mathcal{B}\otimes_\C \C\langle z\rangle\rightarrow \mathcal{B}$$ is positive in the sense, that it maps sums of hermitian squares to sums of hermitian squares. We apply this to the case $\mathcal{B}={\rm M}_d(\C[x])$ and obtain the positive mapping $${\rm id}\otimes\varphi\colon {\rm M}_d(\C[x])\otimes_\C \C\langle z \rangle\cong {\rm M}_d\left( \C[x]\otimes_\C \C\langle z\rangle\right) \rightarrow {\rm M}_d(\C[x]).$$ Note that $\left((x\bullet z)^{i+j}\right)_{i,j=0,\ldots,d-1}$ is a sum of hermitian squares in  ${\rm M}_d\left( \C[x]\otimes_\C \C\langle z\rangle\right),$ which is mapped to $\mathcal{H}_e(h)$ under ${\rm id}\otimes \varphi.$
\end{proof}

\begin{corollary}[\cite{neplth}]
If some power $h^r$ admits a definite determinantal representation, then $\mathcal{H}_e(h)$ is a sum of hermitian squares of polynomial matrices.
\end{corollary}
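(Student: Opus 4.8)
The plan is to reduce the statement directly to Proposition~\ref{hermite} by building the required positive linear functional $\varphi$ from the given determinantal representation of $h^r$ via a normalized trace. First I would invoke the normalization already carried out in the proof of Theorem~\ref{power} (direction (i)$\Rightarrow$(ii)): after multiplying $h$ by a suitable nonzero real scalar and replacing each $M_i$ by $P^*M_iP$ for an appropriate invertible $P$, one may assume $h^r=\det(x_1M_1+\cdots+x_nM_n)$ with all $M_i\in{\rm Her}_{rd}(\C)$ and $e\bullet M=I_{rd}$. I should point out that rescaling $h$ by a nonzero constant does not change $\mathcal{H}_e(h)$: the Newton sums $N_k$ depend only on the roots of $h(a-te)$ as a polynomial in $t$, and these are unaffected by scaling $h$, so the statement to be proved is insensitive to this normalization.

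Next I would define $\varphi\colon\C\langle z\rangle\rightarrow\C$ by $\varphi(p)=\tfrac{1}{r}{\rm tr}\big(p(M_1,\ldots,M_n)\big)$ (equivalently, $\tfrac1r\,{\rm tr}\circ\pi$ for the finite-dimensional $*$-representation $\pi$ of $\A_e(h)$ supplied by Theorem~\ref{power}). This is linear and positive: since $z_i^*=z_i$ and the $M_i$ are hermitian, the substitution map $p\mapsto p(M)$ is a $*$-homomorphism, hence $\varphi(p^*p)=\tfrac{1}{r}{\rm tr}\big(p(M)^*p(M)\big)\geq 0$; note that $\varphi$ need not be unital, but Proposition~\ref{hermite} does not require this. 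It then remains to verify $\varphi(H_\alpha)=c_\alpha$ for all $\alpha$ with $\vert\alpha\vert\leq 2(d-1)$. The only point needing care is the bookkeeping with the exponent $r$: because $e\bullet M=I$, the polynomial $h^r_{a,e}(t)$ is the characteristic polynomial of $a\bullet M$, and as $h^r_{a,e}=h_{a,e}^r$ its roots are precisely those of $h_{a,e}$, each with multiplicity $r$. Hence $N_k\big(h^r_{a,e}(t)\big)=r\cdot N_k\big(h_{a,e}(t)\big)$, so that $N_k\big(h_{a,e}(t)\big)=\tfrac{1}{r}{\rm tr}\big((a\bullet M)^k\big)$. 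Expanding both sides as polynomials in $a$, using $(a\bullet M)^k=\sum_{\vert\alpha\vert=k}H_\alpha(M)\,a^\alpha$ and comparing with the defining relation $\sum_{\vert\alpha\vert=k}c_\alpha a^\alpha=N_k(h_{a,e}(t))$, I obtain $c_\alpha=\tfrac{1}{r}{\rm tr}(H_\alpha(M))=\varphi(H_\alpha)$ for every $\alpha$.

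With this $\varphi$ at hand, Proposition~\ref{hermite} applies verbatim and gives that $\mathcal{H}_e(h)$ is a sum of hermitian squares of polynomial matrices. I do not expect a genuine obstacle: the substantive content is already packaged in Proposition~\ref{hermite}, and the only things to get right are the factor $\tfrac{1}{r}$ arising because the determinantal representation is of $h^r$ rather than of $h$, and the observation that the preliminary rescaling of $h$ needed to arrange $e\bullet M=I$ leaves $\mathcal{H}_e(h)$ unchanged.
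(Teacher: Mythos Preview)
Your proposal is correct and follows exactly the approach the paper intends: construct the positive functional on $\C\langle z\rangle$ from the (normalized) trace of the given determinantal representation and apply Proposition~\ref{hermite}. In fact you are more careful than the paper's one-line proof, since you explicitly track the factor $\tfrac{1}{r}$ arising from the relation $N_k(h^r_{a,e})=r\,N_k(h_{a,e})$ and note that the preliminary rescaling of $h$ leaves $\mathcal{H}_e(h)$ unchanged.
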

\begin{proof}
As we have explained above, the trace we obtain from a determinantal representation will have the desired properties  from the last Proposition.
\end{proof}

\section{Computational Aspects}
In the proof of Theorem \ref{main} we use a positive linear functional on $\A_e(h)$ to construct a definite determinantal representation of some multiple of $h$. In fact the functional only needs to be defined on a finite-dimensional subspace for the argument to work, as we now explain.

Let $\C\langle z\rangle_m$ be the finite-dimensional space of noncommutative polynomials of degree at most $m$. Let $h$ be hyperbolic of degree $d$, and set $k=2(d-1).$ Now assume  we have a linear functional $$\varphi\colon \C\langle z\rangle_{2k}\rightarrow \C$$ with \begin{itemize}\item $\varphi(1)=1$ and $\varphi(p^*)=\overline{\varphi(p)}$ for all $p\in\C\langle z\rangle_{2k}$ \item $\varphi(p^*p)\geq 0$ for all $p\in\C\langle z\rangle_k$ \item $\varphi(p\cdot h_{a,e}(a\bullet z)\cdot q)=0$ for all $p,q\in\C\langle z\rangle$ such that $\deg(p\cdot h_{a,e}(a\bullet z)\cdot q)\leq 2k.$ \end{itemize} Note that finding such $\varphi$ amounts to solving a {\it single} semidefinite feasibility problem, canonically constructed from $h$ (and $e$).

With $\varphi$ we now perform a partial GNS construction  and equip $\C\langle z\rangle_k$ with the bilinear form $\langle a,b\rangle =\varphi(b^*a).$ After passing to the quotient with respect to $N=\{a\mid \langle a,a\rangle =0\}$ this becomes an inner product.  We now consider the hermitian linear operators $$M_i\colon \C\langle z\rangle_{d-1}/N\rightarrow \C\langle z\rangle_d /N \rightarrow \C\langle z\rangle_{d-1}/N,$$ where the first map is multiplication with (the residue class of) $z_i$ and the second is the orthogonal projection. For the well-definedness of multiplication with $z_i$ we use the Cauchy-Schwarz inequality, and need $\varphi$ defined on $\C\langle z\rangle_{2(d+1)}$. As in the proof of Theorem \ref{main} one then checks that $$h_{a,e}(a\bullet M)v=0,$$ where $v$ is the residue class of $1.$ We need here that $h_{a,e}(a\bullet z)\in N,$ which follows from the third condition on $\varphi.$ The case $a=e$, together with the Cauchy-Schwarz inequality, shows  $e\bullet M={\id}.$ Now the operators $M_i$ will give rise to a determinantal representation of some multiple of $h$. To see that the extra factor does not change the hyperbolicity cone, use an equation $t=f(t)^2+q(t)h_{a,e}(t)$ for $a\in\mathring{\Lambda}_e(h)$ as in Lemma \ref{help} again. Here $f$ can be chosen of degree $d-1$ and $q$ thus of degree $d-2.$ Using this equation one checks $\langle (a\bullet M)w,w\rangle\geq 0$ for all residue classes $w$ of elements from $\C\langle z\rangle_{d-1}.$ To obtain this, $\varphi$ needs to be defined up to degree $4(d-1),$ and we need the third property of $\varphi$ again. All in all we have:

\begin{theorem}\label{comp} Let $h$ be irreducible and hyperbolic in direction $e$. There is a canonical spectrahedron $F(h,e)$ (the set of all positive functionals on $\C\langle z\rangle_{2k}$ fulfilling the above conditions), which has the following property: \begin{itemize}\item If $F(h,e)\neq \emptyset$, then $\Lambda_e(h)$ is spectrahedral. \item If $F(h,e)=\emptyset$ then $-1\in\Sigma^2\A_e(h)$, and in particular no power of $h$ admits a definite determinantal representation. \end{itemize}  \end{theorem}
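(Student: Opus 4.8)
The plan is to split the statement into its two bullets. For the first, I would make the construction sketched just before the theorem precise; for the second, I would argue by contraposition and reduce to the positive functional already produced in the proof of Theorem~\ref{main}. Before either, one should observe that $F(h,e)$ really is a spectrahedron: inside the finite-dimensional dual of $\C\langle z\rangle_{2k}$, the conditions $\varphi(1)=1$, $\varphi(p^*)=\overline{\varphi(p)}$ and $\varphi(p\,h_{a,e}(a\bullet z)\,q)=0$ cut out an affine subspace --- the last ones being, after expanding $h_{a,e}(a\bullet z)=\sum_\alpha q_\alpha(z)a^\alpha$, the finitely many linear equations $\varphi(p\,q_\alpha\,q)=0$ and $\varphi(p\,(1-e\bullet z)\,q)=0$ in the allowed degree range --- while the remaining condition $\varphi(p^*p)\geq 0$ for $p\in\C\langle z\rangle_k$ is exactly positive semidefiniteness of the moment matrix $\bigl(\varphi(w^*w')\bigr)$ indexed by a basis of words of length $\leq k$. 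So $F(h,e)$ is an affine slice of the cone of positive semidefinite matrices.

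For the first bullet, fix $\varphi\in F(h,e)$ and carry out the partial GNS construction: put $\langle a,b\rangle=\varphi(b^*a)$ on $\C\langle z\rangle_k$, pass to the quotient $\C\langle z\rangle_k/N$ with $N=\{a:\langle a,a\rangle=0\}$ (an honest inner product space, by Cauchy--Schwarz), and let $M_i\colon\C\langle z\rangle_{d-1}/N\to\C\langle z\rangle_d/N\to\C\langle z\rangle_{d-1}/N$ be multiplication by $z_i$ followed by orthogonal projection. Well-definedness of the multiplication uses Cauchy--Schwarz; taking $a=e$ and Cauchy--Schwarz again gives $e\bullet M=\id$; the third condition on $\varphi$ gives $h_{a,e}(a\bullet z)\in N$, hence $h_{a,e}(a\bullet M)v=0$ for $v=[1]$, so $a\bullet M$ has an eigenvalue among the zeros of $h_{a,e}(t)$ for every $a$. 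As in the proof of Theorem~\ref{main}, it follows that $g:=\det(x_1M_1+\cdots+x_nM_n)$ has $g_{a,e}(t)$ sharing a root with $h_{a,e}(t)$ for all $a\in\R^n$, and irreducibility of $h$ together with the real Nullstellensatz gives $g=fh$; and for $a\in\mathring\Lambda_e(h)$, plugging $a\bullet z$ into an identity $t=f_0(t)^2+q(t)h_{a,e}(t)$ with $\deg f_0\leq d-1$, $\deg q\leq d-2$ and pairing shows $\langle(a\bullet M)w,w\rangle\geq 0$ for all $w\in\C\langle z\rangle_{d-1}/N$. Hence $\Lambda_e(h)=\Lambda_e(g)=\{a\in\R^n:a\bullet M\succeq 0\}$ is spectrahedral.

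For the second bullet, I would argue contrapositively: suppose $-1\notin\Sigma^2\A_e(h)$. Then, exactly as in the first lines of the proof of Theorem~\ref{main} (using that $1$ is an algebraic interior point of $\Sigma^2\A_e(h)$ by Lemma~\ref{help}), there is a linear functional $\phi\colon\A_e(h)\to\C$ with $\phi(1)=1$, $\phi(a^*)=\overline{\phi(a)}$ and $\phi(a^*a)\geq 0$. Composing the inclusion $\C\langle z\rangle_{2k}\hookrightarrow\C\langle z\rangle$ with the quotient $\C\langle z\rangle\to\A_e(h)$ and then with $\phi$ yields a functional on $\C\langle z\rangle_{2k}$ satisfying every defining condition of $F(h,e)$: normalization and hermitian symmetry are immediate, $p^*p$ maps to a genuine hermitian square in $\A_e(h)$, and $p\,h_{a,e}(a\bullet z)\,q$ maps to $0$. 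So $F(h,e)\neq\emptyset$, and the contrapositive gives $F(h,e)=\emptyset\Rightarrow -1\in\Sigma^2\A_e(h)$. Finally, if some power $h^r$ had a definite determinantal representation, Theorem~\ref{power} would give a finite-dimensional $*$-representation $\pi\colon\A_e(h)\to\mathrm{M}_m(\C)$, whence $-I_m=\pi(-1)$ would be a sum of hermitian squares of matrices --- impossible by taking traces.

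The genuinely delicate point is the degree bookkeeping in the first bullet. Each ingredient --- the Cauchy--Schwarz step that makes the $M_i$ well defined, the identity $e\bullet M=\id$, the vanishing $h_{a,e}(a\bullet M)v=0$, and the inequality $\langle(a\bullet M)w,w\rangle\geq 0$ obtained from $t=f_0^2+q\,h_{a,e}$ --- forces $\varphi$ to be evaluated on products of a prescribed degree, and the argument only goes through because all of these stay within $\C\langle z\rangle_{2k}$. The choice $k=2(d-1)$ is exactly calibrated to make this so (up to the values $d\leq 2$, which are in any case already covered by \cite{hevi,neth}). Once one has carefully verified that the proof of Theorem~\ref{main} never uses $\varphi$ outside this degree window, nothing further is needed.
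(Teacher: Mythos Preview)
Your proposal is correct and follows essentially the same route as the paper: the first bullet is precisely the partial GNS construction spelled out in the paragraphs preceding the theorem, and the second bullet is the contrapositive via the Hahn--Banach functional from the proof of Theorem~\ref{main}, restricted to $\C\langle z\rangle_{2k}$. The paper leaves the second bullet implicit (stating the theorem as a summary with ``All in all we have''), so your explicit contrapositive argument and your remark that $F(h,e)$ is genuinely a spectrahedron are welcome additions rather than departures; your caveat about $d\leq 2$ in the degree bookkeeping is also apt, since the bound $2(d+1)\leq 2k$ needed for the Cauchy--Schwarz step indeed fails there.
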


\section*{Some Open Questions}

We conclude our paper with some open questions.

\begin{itemize}
\item[1.] Is Conjecture \ref{gengenlax} true? This would imply the generalized Lax conjecture.
\item[2.] Is $-1\notin\Sigma^2 \A_e(h)$ for any hyperbolic $h$? This would also imply the generalized Lax conjecture.
\item[3.] Is $-1\notin \Sigma^2 \A_e(h)$ equivalent to Conjecture \ref{gengenlax}?
\item[4.] If $\mathcal H_e(h)$ is a sum of squares, does a functional $\varphi$ as in Proposition \ref{trace} exist?
\item[5.]The Br\"and\'{e}n-Vamos polynomial $h$ from \cite{br2} is an example of a polynomial of which no power admits a determinantal representation. Is $-1\in\Sigma^2 \A_e(h)$ in this case? Is $F(h,e)=\emptyset$?
\end{itemize}

\begin{bibdiv} 
\begin{biblist}

\bib{bhs}{incollection}{,
    AUTHOR = {Backelin, J.}
    AUTHOR ={Herzog, J.}
    AUTHOR={Sanders, H.},
     TITLE = {Matrix factorizations of homogeneous polynomials},
 BOOKTITLE = {Algebra -- some current trends ({V}arna, 1986)},
    SERIES = {Lecture Notes in Math.},
    VOLUME = {1352},
     PAGES = {1--33},
 PUBLISHER = {Springer},
   ADDRESS = {Berlin},
      YEAR = {1988},
 }


\bib{br1}{article}{
    AUTHOR = {Br{\"a}nd{\'e}n, P.},
     TITLE = {Obstructions to determinantal representability},
   JOURNAL = {Adv. Math.},
  FJOURNAL = {Advances in Mathematics},
    VOLUME = {226},
      YEAR = {2011},
    NUMBER = {2},
     PAGES = {1202--1212},
}

\bib{br2}{article}{
AUTHOR={Br{\"a}nd{\'e}n, P.},
TITLE={Hyperbolicity cones of elementary symmetric polynomials are spectrahedral},
JOURNAL ={Preprint},
YEAR={2012},
}

%

\bib{cim}{article}{
    AUTHOR = {Cimpri{\v{c}}, J.},
     TITLE = {A representation theorem for {A}rchimedean quadratic modules
              on {$*$}-rings},
   JOURNAL = {Canad. Math. Bull.},
  FJOURNAL = {Canadian Mathematical Bulletin. Bulletin Canadien de
              Math\'ematiques},
    VOLUME = {52},
      YEAR = {2009},
    NUMBER = {1},
     PAGES = {39--52},
}

%

\bib{cosw}{article}{
    AUTHOR = {Choe, Y.B.}
    AUTHOR={Oxley, J. G.}
    AUTHOR={Sokal, A. D.}
    AUTHOR={Wagner, D. G.},
     TITLE = {Homogeneous multivariate polynomials with the half-plane
              property},
      NOTE = {Special issue on the Tutte polynomial},
   JOURNAL = {Adv. in Appl. Math.},
  FJOURNAL = {Advances in Applied Mathematics},
    VOLUME = {32},
      YEAR = {2004},
    NUMBER = {1-2},
     PAGES = {88--187},
  }

\bib{gar}{article}{
    AUTHOR = {G{\.a}rding, L.},
     TITLE = {An inequality for hyperbolic polynomials},
   JOURNAL = {J. Math. Mech.},
    VOLUME = {8},
      YEAR = {1959},
     PAGES = {957--965},
 }

\bib{gue}{article}{
    AUTHOR = {G{\"u}ler, O.},
     TITLE = {Hyperbolic polynomials and interior point methods for convex
              programming},
   JOURNAL = {Math. Oper. Res.},
  FJOURNAL = {Mathematics of Operations Research},
    VOLUME = {22},
      YEAR = {1997},
    NUMBER = {2},
     PAGES = {350--377},
}


\bib{hevi}{article}{
    AUTHOR = {J.W. Helton and V. Vinnikov},
     TITLE = {Linear matrix inequality representation of sets},
   JOURNAL = {Comm. Pure Appl. Math.},
  FJOURNAL = {Communications on Pure and Applied Mathematics},
    VOLUME = {60},
      YEAR = {2007},
    NUMBER = {5},
     PAGES = {654--674},
}

\bib{lax}{article}{
    AUTHOR = {Lax, P. D.},
     TITLE = {Differential equations, difference equations and matrix
              theory},
   JOURNAL = {Comm. Pure Appl. Math.},
  FJOURNAL = {Communications on Pure and Applied Mathematics},
    VOLUME = {11},
      YEAR = {1958},
     PAGES = {175--194},
 }

\bib{lepara}{article}{
    AUTHOR = {Lewis, A. S.}
    AUTHOR={Parrilo, P. A.}
    AUTHOR={Ramana, M. V.},
     TITLE = {The {L}ax conjecture is true},
   JOURNAL = {Proc. Amer. Math. Soc.},
  FJOURNAL = {Proceedings of the American Mathematical Society},
    VOLUME = {133},
      YEAR = {2005},
    NUMBER = {9},
     PAGES = {2495--2499 (electronic)},
 }


\bib{neth}{article}{
	AUTHOR = {Netzer, T.}
	AUTHOR ={Thom, A.},
	TITLE = {Polynomials with and without determinantal representations},
	JOURNAL ={Linear Algebra Appl.},
	VOLUME={437},
	NUMBER={7},
	YEAR={2012},
	PAGES={1579Ð1595}
}

\bib{neplth}{article}{
	AUTHOR = {Netzer, T.}
	AUTHOR={Plaumann, D.}
	AUTHOR ={Thom, A.},
	TITLE = {Determinantal representations and the Hermite matrix},
	JOURNAL ={Preprint},
	YEAR={2011}
}

%

\bib{past}{incollection}{
    AUTHOR = {Parrilo, P. A.}
    AUTHOR={Sturmfels, B.},
     TITLE = {Minimizing polynomial functions},
 BOOKTITLE = {Algorithmic and quantitative real algebraic geometry
              ({P}iscataway, {NJ}, 2001)},
    SERIES = {DIMACS Ser. Discrete Math. Theoret. Comput. Sci.},
    VOLUME = {60},
     PAGES = {83--99},
 PUBLISHER = {Amer. Math. Soc.},
   ADDRESS = {Providence, RI},
      YEAR = {2003},
 }


%

\bib{ren}{article}{
    AUTHOR = {J. Renegar},
     TITLE = {Hyperbolic programs, and their derivative relaxations},
   JOURNAL = {Found. Comput. Math.},
  FJOURNAL = {Foundations of Computational Mathematics. The Journal of the
              Society for the Foundations of Computational Mathematics},
    VOLUME = {6},
      YEAR = {2006},
    NUMBER = {1},
     PAGES = {59--79},
      ISSN = {1615-3375},
}

%


\bib{san}{article}{
TITLE={On the derivative cones of polyhedral cones},
AUTHOR={Sanyal, R.},
JOURNAL={to appear in Adv. Geometry},
}

\bib{vin}{article}{
AUTHOR={Vinnikov, V.},
TITLE ={LMI representations of convex semialgebraic sets and determinantal representations of algebraic hypersurfaces: past, present, and future},
JOURNAL={Preprint},
}

\bib{wo}{book}{
     TITLE = {Handbook of semidefinite programming},
    SERIES = {International Series in Operations Research \& Management
              Science, 27},
    EDITOR = {H. Wolkowicz and R. Saigal and L. Vandenberghe},
      NOTE = {Theory, algorithms, and applications},
 PUBLISHER = {Kluwer Academic Publishers},
   ADDRESS = {Boston, MA},
      YEAR = {2000},
     PAGES = {xxviii+654},
}
\end{biblist}
\end{bibdiv}
 
\end{document}